\newtheorem{theorem}{Theorem}[section]
\newtheorem{proposition}[theorem]{Proposition}
\newtheorem{lemma}[theorem]{Lemma}
\theoremstyle{definition}
\newtheorem{definition}[theorem]{Definition}
\newtheorem{remark}[theorem]{Remark}
\newcommand{\Z}{{\mathbb Z}}
\newcommand{\N}{{\mathbb N}}
\newcommand{\R}{{\mathbb R}}
\renewcommand{\L}{{\mathcal L}}
\DeclareMathOperator{\pref}{p}
\DeclareMathOperator{\suf}{s}
\DeclareMathOperator{\spref}{sp}
\DeclareMathOperator{\ssuf}{ss}
\DeclareMathOperator{\comp}{vec}
\DeclareMathOperator{\Pref}{Pref}
\DeclareMathOperator{\Suf}{Suf}
\numberwithin{equation}{section}
\date{\today}
\begin{document}

\title[the unilateral recognizability]%
{On B.~Moss\'e's unilateral recognizability theorem}

\author[S. Akiyama]{Shigeki Akiyama}
\address{Institute of Mathematics, University of Tsukuba, Tennodai 1-1-1,
Tsukuba, Ibaraki, 305-8571 JAPAN.}
\email{akiyama@math.tsukuba.ac.jp}

\author[B. Tan]{Bo Tan}
\address{School of Mathematics and Statistics, Huazhong University of Science \& Technology, Wuhan 
430074, P.R.CHINA.}
\email{tanbo@hust.edu.cn}

\author[H. Yuasa]{Hisatoshi Yuasa}
\address{Division of Science, Mathematics and Information, Osaka Kyoiku University,
4-698-1 Asahigaoka, Kashiwara, Osaka 582-8582, JAPAN.}
\email{hyuasa@cc.osaka-kyoiku.ac.jp}

\begin{abstract}
We complete statement and proof for B.~Moss\'e's unilateral recognizability theorem. We also 
provide an algorithm for deciding the unilateral non-recognizability of a given primitive substitution. 
\end{abstract}

\keywords{primitive substitution, recognizability, algorithm}

\subjclass[2000]{ Primary 68R15; Secondary 37B10}

\maketitle

\section{Introduction}\label{intro}

Let $A$ be a finite alphabet consisting of at least two letters. Let $A^+$ denote the set of nonempty 
words over the alphabet $A$. Every map $\sigma$ from the alphabet $A$ to $A^+$ is called a 
{\em substitution} on the alphabet $A$. The substitution $\sigma$ is said to be {\em primitive} if 
there exists $k \in \N=\{1,2,\dots\}$ such that for any pair $(a,b) \in A \times A$, the letter 
$a$ occurs in the word $\sigma^k(b)$. Throughout the present paper, a given substitution is assumed 
to be primitive. Suppose that the substitution $\sigma$ has a fixed point 
$u=u_0u_1u_2 \dots$ in $A^{\Z_+}$, where $\Z_+=\Set{0} \cup \N$. If the fixed point $u$ is aperiodic 
under the left shift $T$ on $A^{\Z_+}$, i.e.\ $T^iu=u$ for all $i \in \N$, then the substitution 
$\sigma$ is said to be {\em aperiodic}. There is an algorithm \cite{HL,pansiot2} which can check 
whether a given substitution is aperiodic. We always assume that the substitution $\sigma$ is aperiodic. 
For every $p \in \N$, set 
\[
E_p=\Set{0} \cup \Set{\left\vert \sigma^p(u_{[0,n)}) \right\vert | n \in \N},
\]
where $|w|$ is the length of a word $w$. The elements of $E_p$ are called {\em natural 
$p$-cutting points}; see also \cite[\S~3]{MR1168468}, \cite[\S~3.4]{MR1709427} and 
\cite[\S~7.2.1]{MR1970385}. It is clear that $E_q \subsetneq E_p$ whenever $q > p$. 
The substitution $\sigma$ is said to be {\em unilaterally recognizable} 
\cite[p.~530]{MR873430} if there exists $L \in \N$ such that if 
$u_{[i,i+L)}=u_{[j,j+L)}$ and $i \in E_1$ then $j \in E_1$. This definition does not depend on the 
choice of the fixed point $u$ of the substitution $\sigma$. Also, the substitution $\sigma$ is said 
to be {\em bilaterally recognizable} \cite[D\'efinition~1.2]{MR1168468} if there exists 
$L \in \N$ such that if $u_{[i-L,i+L)}=u_{[j-L,j+L)}$ and $i \in E_1$ then $j \in E_1$. 

The unilateral recognizability is an important notion from viewpoints of subshifts arising from 
substitutions. If the substitution $\sigma$ is unilaterally recognizable, 
then a unilateral subshift $X_\sigma$ arising from the substitution $\sigma$ has a Kakutani-Rohlin 
partition \cite{HPS} built on a {\em clopen} subset $\sigma(X_\sigma)$ of $X_\sigma$. 
Proposition VI.~6 of \cite{MR924156} states that given a point $x=x_0x_1x_2 \ldots \in X_\sigma$, the 
first return time of the point $\sigma(x)$ to the clopen subset $\sigma(X_\sigma)$ equals 
$|\sigma(x_0)|$. This leads to a fact that the first return map on 
$\sigma(X_\sigma)$ is a topological factor of $X_\sigma$, which shows a self-similarity of 
$X_\sigma$ if the substitution $\sigma$ is injective on the alphabet $A$; see 
\cite[Corollary~VI.~8]{MR924156}. 
It is also a significant consequence of the unilateral recognizability that 
$\sigma(X_\sigma)$ is open; see \cite[Proposition VI.~3]{MR924156} and \cite[Lemme~2]{MR873430}. 
The unilateral recognizability is a premise of the celebrated theorem of \cite{MR873430}, which 
characterizes eigenvalues and eigenfunctions of the subshift $X_\sigma$. 

B.~Moss\'e gave \cite[Th\'eor\`eme~3.1]{MR1168468} to characterize the unilateral 
{\em non}-recognizability. However, if we dare say, it is incomplete and should be 
formulated as follows. 
\begin{theorem}\label{necsuffcondunilrecog}
The following are equivalent$:$
\begin{enumerate}
\item\label{non-recog}
the substitution $\sigma$ is not unilaterally recognizable$;$
\item\label{charac}
for each $L \in \N$, there exist $i,j \in \Z_+$ such that 
\begin{itemize}
\item
$\sigma(u_j)$ is a strict suffix of $\sigma(u_i);$
\item
$\sigma(u_{i+k})=\sigma(u_{j+k})$ for each integer $k$ with $1 \le k \le L$.
\end{itemize}
\end{enumerate}
\end{theorem}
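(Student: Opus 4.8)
The plan is to prove the two implications separately; the implication (2)$\Rightarrow$(1) is short, while (1)$\Rightarrow$(2) is the substantive one. For (2)$\Rightarrow$(1): fix $L\in\N$ and take $i,j$ as in (2). In $u=\sigma(u)$ the block $\sigma(u_i)$ occupies the positions $\bigl[\,|\sigma(u_{[0,i)})|,\ |\sigma(u_{[0,i+1)})|\,\bigr)$ and $\sigma(u_j)$ occupies $\bigl[\,|\sigma(u_{[0,j)})|,\ |\sigma(u_{[0,j+1)})|\,\bigr)$. Writing $\sigma(u_i)=p\,\sigma(u_j)$ with $p$ a nonempty word of length $|\sigma(u_i)|-|\sigma(u_j)|<|\sigma(u_i)|$, the position $\alpha:=|\sigma(u_{[0,i)})|+|p|$ lies strictly inside the block $\sigma(u_i)$, so $\alpha\notin E_1$, whereas $\beta:=|\sigma(u_{[0,j)})|\in E_1$. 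By construction $T^{\alpha}u=\sigma(u_j)\sigma(u_{i+1})\sigma(u_{i+2})\cdots$ and $T^{\beta}u=\sigma(u_j)\sigma(u_{j+1})\sigma(u_{j+2})\cdots$, and the second bullet of (2) shows these one-sided sequences share the prefix $\sigma(u_j)\sigma(u_{j+1})\cdots\sigma(u_{j+L})$, of length at least $L+1$. Hence $u_{[\beta,\beta+L)}=u_{[\alpha,\alpha+L)}$ with $\beta\in E_1$, $\alpha\notin E_1$; since $L$ is arbitrary, $\sigma$ is not unilaterally recognizable.

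For (1)$\Rightarrow$(2) I would first extract a limiting relation by compactness. Assuming $\sigma$ is not unilaterally recognizable, for each $N$ choose $i_N\in E_1$, $j_N\notin E_1$ with $u_{[i_N,i_N+N)}=u_{[j_N,j_N+N)}$. Write $i_N=|\sigma(u_{[0,p_N)})|$, so $T^{i_N}u=\sigma\bigl(T^{p_N}u\bigr)$; let $q_N$ index the $\sigma$-block of $u$ containing $j_N$, so $r_N:=|\sigma(u_{[0,q_N+1)})|-j_N\in\{1,\dots,|\sigma(u_{q_N})|-1\}$ and $T^{j_N}u=w_N\,\sigma\bigl(T^{q_N+1}u\bigr)$, where $w_N$ is the length-$r_N$ suffix of $\sigma(u_{q_N})$. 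The alphabet is finite, the $r_N$ are bounded, and the subshift $X_\sigma=\overline{\{T^nu:n\ge0\}}$ is compact, so after passing to a subsequence we may assume $r_N\equiv r$, $u_{q_N}\equiv a'$ (so $w_N\equiv w$, the length-$r$ suffix of $\sigma(a')$, a \emph{proper} nonempty suffix), $T^{p_N}u\to z$ and $T^{q_N+1}u\to z'$ with $z,z'\in X_\sigma$. Since $\sigma\colon A^{\Z_+}\to A^{\Z_+}$ is continuous and $N\to\infty$, passing to the limit in $u_{[i_N,i_N+N)}=u_{[j_N,j_N+N)}$ gives
\[
\sigma(z)=w\,\sigma(z'),\qquad z\in X_\sigma,\quad a'z'\in X_\sigma
\]
(the last because $a'z'=\lim_N T^{q_N}u$); equivalently, writing $\sigma(a')=vw$ with $v$ nonempty, $\sigma(a'z')=v\,\sigma(z)$.

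It remains, given $L\in\N$, to produce $i,j$ as in (2). The idea is to read off the two $\sigma$-factorizations of $\sigma(z)$ — the intrinsic one $\sigma(z_0)\sigma(z_1)\cdots$ and the shifted one $w\,\sigma(z'_0)\sigma(z'_1)\cdots$ — block by block, keeping track of a ``phase'': a letter $d$ together with a proper nonempty suffix $s$ of $\sigma(d)$ and points $\eta,\xi\in X_\sigma$ with $d\eta\in X_\sigma$ and $s\,\sigma(\eta)=\sigma(\xi)$ (initially $d=a'$, $s=w$, $\eta=z'$, $\xi=z$). At each step one compares $|s|$ with the length of the first block $\sigma(\xi_0)$ of $\sigma(\xi)$; either (i) one gets $\sigma(\xi_0)=s$, a \emph{strict-suffix relation} of $\sigma(\xi_0)$ in $\sigma(d)$, and then checks whether $\sigma(\eta_0)=\sigma(\xi_1),\sigma(\eta_1)=\sigma(\xi_2),\dots$ for the next $L$ indices — if so, realizing the length-$(L{+}1)$ prefixes of $d\eta$ and of $\xi$ as factors of $u$ (possible since $\sigma$ is primitive, hence $X_\sigma$ is minimal, so every factor of a point of $X_\sigma$ occurs in $u$) yields the required pair $(i,j)$ — or (ii) the comparison does not resolve, and a short computation moves the phase one or more blocks further along, the new $s'$ being again a proper nonempty suffix of the new $\sigma(d')$. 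As there are only finitely many phases, the process either reaches case (i) with the wanted $L$-step agreement or revisits a phase, and a repeated phase can be pumped to stretch the agreement to length $L$; either way (2) holds. I expect the bookkeeping in step (ii) to be the main obstacle — and precisely the point where B.~Moss\'e's argument needs to be completed: two $\sigma$-factorizations of one sequence that agree on a long prefix need not agree block by block, so one must verify that the first diverging block is forced into a prefix- or suffix-relation and that, after a bounded number of such reductions, the remaining matched prefix is still long enough to supply $L$ steps (and, correspondingly, that the non-recognizability window $N$ was chosen large enough from the start). The rest is routine.
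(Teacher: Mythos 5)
Your argument for \eqref{charac} $\Rightarrow$ \eqref{non-recog} is correct (the paper leaves this easy direction implicit), and your compactness reduction for the converse is sound: it legitimately replaces the paper's Step~1 (natural $p$-cuttings of long common factors for \emph{every} $p$, followed by a pigeonhole over $p$) with a single exact relation $\sigma(z)=w\,\sigma(z')$ in $X_\sigma$, where $w$ is a proper nonempty suffix of $\sigma(a')$ and $a'z'\in X_\sigma$. The difficulty is that everything after that point --- the part you defer as ``the main obstacle'' and then declare ``routine'' --- is precisely the content of the theorem, and the mechanism you sketch for it does not work. A ``phase'' in your sense is a quadruple $(d,s,\eta,\xi)$ containing two infinite tails $\eta,\xi\in X_\sigma$, so there are \emph{not} finitely many phases; and recurrence of the finite part $(d,s)$ alone cannot be ``pumped'', because the two visits to $(d,s)$ come with different continuations, and nothing forces the block-by-block agreement to persist beyond the point where those continuations diverge. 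Concretely, your scan could a priori (a) reach a shared cutting point of the two factorizations infinitely often yet each time agree blockwise for fewer than $L$ blocks before desynchronizing, or (b) after some point never produce a shared cutting point again, the two cutting sequences interleaving strictly forever. Your proposal excludes neither behaviour.

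What is missing is the quantitative input the paper uses to kill exactly these scenarios: the $(K+1)$-power-freeness of $u$ (Lemma~\ref{uppbndforpow}), with $K$ computable from $\sigma$, together with the choice $k=C^4(K+1)+2C^2+1$. In the paper's Step~2, the assumption that the two natural $q$-cuttings of the same word differ forces a word of period at most $S_q$ to cover a stretch of length at least $hI_q$, hence a power of exponent about $hI_q/S_q>K$, a contradiction; this is what pins down $\gamma_q=\sigma^{q-p}(\gamma_p)$, and then Step~3 desubstitutes to obtain an agreement of length $\vert\sigma^{q-p-1}(\gamma_p)\vert\ge L$, with the strict-suffix relation extracted by the backtracking procedure (P.1)--(P.4). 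Your write-up never invokes aperiodicity or power-freeness in the direction \eqref{non-recog} $\Rightarrow$ \eqref{charac}, which is a reliable symptom that the essential step is absent: some use of Lemma~\ref{uppbndforpow} (or an equivalent periodicity argument) is unavoidable here. As it stands, the proposal is a reasonable reformulation of the problem plus a correct proof of the easy implication, but not a proof of the hard one.
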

Recall that the substitution $\sigma$ is assumed to be aperiodic. 
The word $B$ appearing in the statement of \cite[Th\'eor\`eme~3.1]{MR1168468} corresponds to a factor 
of $\sigma(u_iu_{i+1} \dots u_{i+L})$. The letters $a$ and $b$ in the statement correspond to $u_i$ and 
$u_j$, respectively. It is important to regard $u_i$ and $u_j$ as letters accompanied with information 
on the positions $i$ and $j$ where they occur. 

B.~Moss\'e's proof for her characterization would be difficult to completely 
follow, in particular, Part~(4) in p.~332. No proofs for it can be found in recent textbooks 
\cite{MR1970385,MR2041676,MR2590264}, though a proof for the {\em bilateral} recognizability 
is written in \cite[pp.~163-164]{MR2041676}. However, the difficulty is overcome in Step~3 in the 
proof of Theorem~\ref{necsuffcondunilrecog}, which is one of the goals of the present paper. 
In Lemma~\ref{bil_recog_index}, we also show that an index under which the aperiodic substitution 
$\sigma$ is bilaterally recognizable can be described in terms of only parameters derived from the 
substitution $\sigma$ itself. In fact, the statement of the lemma excluding the computability of the 
index is exactly \cite[Th\'eor\`eme~3.1 bis.]{MR1168468}, which is presented in terms of 
{\em local unique composition property} defined by Property~(6) in \cite{Sol}. As a consequence of 
the lemma, Proposition~\ref{constpiscomputable} affirms that a constant $p \in \N$ for which 
if $\sigma^{p-1}(a) \ne \sigma^{p-1}(b)$ and $a, b \in A$ then $\sigma^k(a) \ne \sigma^k(b)$ 
for all $k \in \Z_+$.

The other goal is to present an algorithm which determines whether or not a given aperiodic, 
primitive substitution is unilaterally recognizable. The algorithm is described in terms of 
the existence of a cycle in a directed, finite graph whose vertex set consists of a pair of those 
words of constant length which occur in the sequence $u$. 
In view of Theorem~\ref{necsuffcondunilrecog}, it may be of interest to find a computable constant 
$M$, such that the existence of a word $v=v_1v_2 \dots v_{M+1}$ of length $M+1$ satisfying that 
\begin{itemize}
\item
$\sigma(v_1)$ is not a strict suffix of $\sigma(w_1)$, or 
\item
$\sigma(v_k) \ne \sigma(w_k)$ for some integer $k$ with $2 \le k \le M+1$
\end{itemize}
is equivalent to the unilateral recognizability of the substitution $\sigma$, 
which would give an easier algorithm.

\section{$K$-power free sequences}\label{sec_freeness}

We shall make terminology excepting that done in the preceding section. The {\em empty word} is denoted 
by $\Lambda$. Set $A^\ast=A^+ \cup \Set{\Lambda}$. We say that a word $w \in A^\ast$ {\em occurs} in a 
word $v \in A^\ast$ if there exist $p,s \in A^\ast$ such that $v=pws$. 
We then write $w \prec v$. More specifically, $w$ is said to occurs at the 
position $|p|+1$ in $v$. The position is called an occurrence of $w$ in $v$. Let $|v|_w$ 
denote the number of occurrences of $w$ in $v$. 
The words $p$ and $s$ are called a {\em prefix} and {\em suffix} of $v$, 
respectively. We then write $p \prec_{\pref} v$ and $s \prec_{\suf} v$, respectively. If 
$|p|<|v|$ (resp.\ $|s|<|v|$), then $p$ (resp.\ $s$) is called a strict prefix (resp.\ suffix) 
of $v$, and then we write $p \prec_{\spref} v$ (resp.\ $s \prec_{\ssuf} v$). We can also define 
$e \prec_{\suf} f$ for $e,f \in A^{\Z_+}$ in such a way that $e=f_{[n,+\infty)}$ for some $n \in \Z_+$. 
A power of a word $w \in A^\ast$ is a word of the form 
$\underbrace{ww \dots w}_{n \textrm{ times}}$ with some $n \in \Z_+$. The power is denoted by $w^n$. 
In particular, $w^0 = \Lambda$. Set 
\[
I_i = \min_{a \in A}|\sigma^i(a)| \textrm{ and } S_i = \max_{a \in A}|\sigma^i(a)|.
\]

A nonnegative square matrix $M$ is said to be {\em primitive} if there exists $k \in \N$ for which 
$M^k$ is positive. The {\em incidence matrix} $M_\sigma$ of the substitution $\sigma$ is defined to 
be an $A \times A$ matrix whose $(a,b)$-entry equals $|\sigma(a)|_b$. The matrix $M_\sigma$ is 
primitive and has a positive, right eigenvector $\beta=(\beta_a)_{a \in A}$ corresponding to Perron 
eigenvalue $\lambda$ of $M_\sigma$, i.e.\ the absolute value of any other eigenvalue is less than 
$\lambda$. See for example \cite[Sections~4.2-4.5]{MR1369092}. Since for all $a \in A$,
\[
\sum_{b \in A}({M_\sigma}^n)_{a,b} \beta_b= \lambda^n \beta_a,
\]
it follows that for all $a \in A$ and $n \in \N$,
\begin{equation*}\label{originalsub}
\frac{\min_{b \in A}\beta_b}{\max_{b \in A}\beta_b} \cdot \lambda^n \le \sum_{b \in A}
({M_\sigma}^n)_{a,b} \le \frac{\max_{b \in A}\beta_b}{\min_{b \in A}\beta_b} \cdot \lambda^n.
\end{equation*}
Put 
\[
C = \left\lceil \frac{\max_{b \in A}\beta_b}{\min_{b \in A}\beta_b} \right\rceil.
\]
It follows that for all $a \in A$ and $n \in \N$,
\begin{equation}\label{conseqPF}
C^{-1}\lambda^n \le |\sigma^n(a)| \le C\lambda^n.
\end{equation}

Given a sequence $v \in A^{\Z_+}$, set 
\begin{align*}
\L(v)^+ &= \Set{v_{[i,j]}:=v_iv_{i+1} \dots v_j|i,j \in \Z_+,i \le j}; \\
\L(v) &= \L(v)^+ \cup \Set{\Lambda}; \\
\L_k(v) &= \Set{w \in \L(v)| \vert w\vert =k}.
\end{align*}

We say that a word $w \in A^\ast$ {\em occurs} at a position $i \in \Z_+$ in $v$ if 
$v_{[i,i+|w|)}=w$. The integer $i$ is called an {\em occurrence} of the word $w$. The fixed point $u$ 
of the substitution $\sigma$ is {\em uniformly recurrent}, i.e.\ given a word $w \in \L(u)$, there 
exists $g \in \N$ so that any interval of length $g$, which is a subset of $\Z_+$, includes an 
occurrence of the word $w$. For, the primitivity of the substitution $\sigma$ implies the existence 
of $n \in \N$ such that $w \prec \sigma^n(a)$ for all $a \in A$. Then, the length $g$ can be chosen 
to be $2\max_{a \in A}|\sigma^n(a)|$, because 
\[
u=\sigma^n(u)=\sigma^n(u_0)\sigma^n(u_1) \dots
\] 
We shall refer to the length $g$ as a {\em gap} of occurrences of the word $w$ in the sequence $u$. 
Let $g$ be the maximal value of gaps of occurrences of words belonging to $\L_2(u)$. 
\begin{lemma}
The value $g$ is computable.
\end{lemma}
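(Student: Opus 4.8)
The plan is to show that $g$, the maximal gap of occurrences of two-letter words in $u$, is bounded by an explicitly computable quantity, so that one can in principle search for it. The key observation is that the argument sketched just before the statement already gives a computable upper bound for each individual gap once one knows a value of $n$ for which $w \prec \sigma^n(a)$ for every $a \in A$ and every $w$ in the finite set $\L_2(u)$. So the whole task reduces to producing such an $n$ effectively.

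First I would recall that $\sigma$ is primitive, so by definition there is $k \in \N$ such that every letter occurs in $\sigma^k(b)$ for every $b \in A$; such a $k$ is computable simply by examining the matrices $M_\sigma, M_\sigma^2, \dots$ until one of them is strictly positive. Next, every word $w \in \L_2(u)$ is of the form $u_mu_{m+1}$; since $u=\sigma^k(u)$, the pair $u_mu_{m+1}$ either sits inside a single block $\sigma^k(u_\ell)$ or straddles the boundary between $\sigma^k(u_\ell)$ and $\sigma^k(u_{\ell+1})$, hence $w \prec \sigma^k(u_\ell u_{\ell+1})$ for some $\ell$, and in particular $w$ occurs in $\sigma^k(a b)$ for some two-letter word $ab \in \L_2(u)$. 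Iterating, $w \prec \sigma^{2k}(cd)$ for some $cd \in \L_2(u)$, where now each letter of $\sigma^k(cd)$ — and so each letter of $A$, by the choice of $k$ — appears; consequently $w \prec \sigma^{2k}(a)$ for \emph{every} $a \in A$. Thus one may take $n = 2k$, which is computable. (Alternatively: first enumerate the finite set $\L_2(u)$ — itself computable since $\L_2(u)=\L_2(\sigma^{k}(a))$ for any fixed $a$ once $k$ is large enough — and then increase $n$ until every element of $\L_2(u)$ occurs in $\sigma^n(a)$ for all $a$; termination is guaranteed by the preceding sentence.)

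With such an $n$ in hand, the paragraph preceding the lemma shows that $2\max_{a \in A}|\sigma^n(a)|$ is a gap for each $w \in \L_2(u)$, and this bound does not depend on $w$. Hence $g \le 2\max_{a \in A}|\sigma^n(a)| =: G$, a computable integer. To pin down the exact value of $g$, it then suffices to check, for each two-letter word $w \in \L_2(u)$ and each $h \le G$, whether the prefix $u_{[0,G+h)}$ contains an occurrence of $w$ in every window of length $h$; the least $h$ that works for a given $w$ is its gap, and $g$ is the maximum of these finitely many numbers. All of this is a finite computation on the explicitly describable prefix of $u$, so $g$ is computable.

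The only genuine obstacle is making the reduction ``every $w \in \L_2(u)$ occurs in $\sigma^n(a)$ for all $a$'' effective, i.e.\ justifying that one can decide membership in $\L_2(u)$ and that the desired $n$ is found after finitely many trials; both follow from primitivity together with the fixed-point equation $u=\sigma(u)$, as indicated above, so no real difficulty remains — the rest is bookkeeping with lengths.
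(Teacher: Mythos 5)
There is a genuine gap at the decisive step. You reduce everything to finding a computable $n$ with $w \prec \sigma^n(a)$ for all $a \in A$ and all $w \in \L_2(u)$, and you claim $n=2k$ works, where $k$ is the primitivity index of $M_\sigma$. But your justification --- ``each letter of $A$ appears in $\sigma^k(cd)$, consequently $w \prec \sigma^{2k}(a)$ for every $a$'' --- is a non sequitur. Positivity of ${M_\sigma}^k$ tells you that every \emph{letter} occurs in $\sigma^k(a)$, so $\sigma^{2k}(a)$ contains the blocks $\sigma^k(c)$ and $\sigma^k(d)$ separately; but if $w$ occurs in $\sigma^k(cd)$ only by straddling the junction between $\sigma^k(c)$ and $\sigma^k(d)$, you need the \emph{two-letter factor} $cd$ to occur inside $\sigma^k(a)$, and nothing in the positivity of ${M_\sigma}^k$ guarantees that. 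What is really needed is the primitivity of the induced $2$-block substitution $\sigma_2$ on the alphabet $\L_2(u)$ together with a computable bound on \emph{its} primitivity index; the letter-level index $k$ does not control it. This is exactly how the paper proceeds: it invokes the primitivity of $\sigma_2$ (Queff\'elec, Lemma~V.12), applies Wielandt's bound to the $(\# A)^2$-sized matrix $M_{\sigma_2}$ to get the explicit exponent $n_0=(\# A)^4-2(\# A)^2+2$, and then converts $2\max_w|{\sigma_2}^{n_0}(w)|$ into the bound $2C\max_{a}|\sigma(a)|\lambda^{n_0}$ via the Perron eigenvalue. Your fallback ``increase $n$ until it works'' would be a legitimate alternative route (monotonicity in $n$ does hold), but only if termination is guaranteed by the primitivity of $\sigma_2$; as written you guarantee it by the same flawed $2k$ claim, so the gap propagates.

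A secondary, smaller point: your final step for pinning down the \emph{exact} value of $g$ checks only the windows inside the prefix $u_{[0,G+h)}$, which does not certify that every window of length $h$ in all of $u$ contains $w$ (you would have to check all of $\L_h(u)$, not a prefix). This does not really matter here --- the paper itself, and every later use of $g$, only requires a computable \emph{upper bound} --- but as stated that verification is not sound.
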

\begin{proof}
Consider an auxiliary substitution $\sigma_2:\L_2(u) \to \L_2(u)^+$~\cite[pp.~95-96]{MR924156}, 
where $\L_2(u)$ is regarded as a finite alphabet. The substitution $\sigma_2$ is defined by for 
$w \in \L_2(u)$,
\[
\sigma_2(w)=\sigma(w)_{[1,2]} \sigma(w)_{[2,3]} \dots \sigma(w)_{[|\sigma(w_1)|,|\sigma(w_1)|+1]}.
\]
The substitution $\sigma_2$ is primitive~\cite[Lemma~V.12]{MR924156}. Observe that $\# \L_2(u) \le 
(\# A)^2$ and $|\sigma_2(w)|=|\sigma(w_1)|$ for all $w \in \L_2(u)$. 
Set 
\[
u^{(2)}=(u_0u_1)(u_1u_2)(u_2u_3) \dots,
\]
which is a fixed point of $\sigma_2$ in $\L_2(u)^{\Z_+}$~\cite[Lemma~V.11]{MR924156}. Observe that 
given $w \in \L_2(u)$ and $i \in \Z_+$, $i$ is an occurrence of $w$ in $u$ if and only if $u^{(2)}_i=w$. 
In view of \cite[Theorem~2.9]{Sen}, the least $n \in \N$ for which every entry of $(M_{\sigma_2})^n$ 
is positive has a upper bound $(\# \L_2(u))^2-2 \cdot \# \L_2(u) + 2 \le (\# A)^4 -2(\# A)^2 + 2$. 
Put 
\[
n_0=(\# A)^4 -2(\# A)^2 + 2.
\]
As done in the proof of \cite[Lemma~5.1~(iii)]{Y4}, define a 
$\L_2(u) \times A$-matrix $N$ by letting $N_{w,a}=(M_\sigma)_{w_1,a}$ for all 
$(w,a) \in \L_2(u) \times A$. Let $v$ be a positive eigenvector of $M_\sigma$ corresponding to 
$\lambda$, as above. Since $M_{\sigma_2}N=NM_\sigma$, whose $(w,a)$-entry is $|\sigma^2(w_1)|_a$ for 
all $(w,a) \in \L_2(u) \times A$, a positive vector $Nv$ is an eigenvector of $M_{\sigma_2}$ 
corresponding to its eigenvalue $\lambda$. Consequently, the number $\lambda$ is a dominant eigenvalue 
of a primitive matrix $M_{\sigma_2}$; see for example \cite[p.~108 and Theorem~4.5.11]{MR1369092}. 
We then obtain that for all $w \in \L_2(u)$ and $n \in \N$,
\[
\left(C \max_{a \in A}|\sigma(a)|\right)^{-1} \lambda^n \le |{\sigma_2}^n(w)| \le 
C \max_{a \in A}|\sigma(a)| \lambda^n.
\]
It follows finally that the value $g$ has a upper bound $2C \max_{a \in A}|\sigma(a)| \lambda^{n_0}$. 
\end{proof}

A word $v \in A^+$ is said to be {\em primitive} \cite[D\'efinition~2.2]{MR1168468} if it holds that 
\[
v = w^n, w \in A^+, n \in \N \Rightarrow w = v. 
\]
We say that a sequence $v=v_0v_1v_2 \ldots \in A^{\Z_+}$ is {\em ultimately periodic} if there exist 
$n \in \Z_+$ and word $w \in A^+$ for which 
\[
v=v_{[0,n)}www \dots
\]
If an ultimately periodic sequence $v$ is uniformly recurrent, then 
$v$ is {\em periodic}, i.e.\ $v$ is written as an infinite repetition of a single word. 
Recall that the substitution $\sigma$ is assumed to be aperiodic. 
\begin{lemma}[{\cite[Lemme 2.5]{MR1168468}}]\label{suffcondperiodic}
There does not exist $N,p \in \N$ and primitive word $v \in A^+$ for which
\begin{itemize}
\item
$\sigma^p(w) \prec v^N$ for any $w \in \L_2(u);$
\item
$2|v| \le \min_{a \in A}|\sigma^p(a)|$. 
\end{itemize}
\end{lemma}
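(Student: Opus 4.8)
The plan is to prove the lemma by contradiction. Suppose that $N,p\in\N$ and a primitive word $v\in A^+$ \emph{do} satisfy the two stated conditions. I will deduce from this that the sequence $\sigma^p(u)=\sigma^p(u_0)\sigma^p(u_1)\sigma^p(u_2)\cdots$ is periodic with period $|v|$. Since $u$ is a fixed point of $\sigma$ we have $\sigma^p(u)=u$, so $u$ itself would be periodic, contradicting the standing hypothesis that $\sigma$ is aperiodic; that contradiction finishes the proof.

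To carry this out, set $\ell_0=0$ and $\ell_n=|\sigma^p(u_{[0,n)})|=\sum_{k=0}^{n-1}|\sigma^p(u_k)|$ for $n\in\N$ --- these $\ell_n$ are exactly the natural $p$-cutting points in $E_p$. In $\sigma^p(u)$ the block $\sigma^p(u_n)$ occupies the positions $[\ell_n,\ell_{n+1})$, and for $w_n:=u_nu_{n+1}\in\L_2(u)$ the block $\sigma^p(w_n)=\sigma^p(u_n)\sigma^p(u_{n+1})$ occupies $[\ell_n,\ell_{n+2})$. The second condition $2|v|\le\min_{a\in A}|\sigma^p(a)|$ gives $|\sigma^p(u_k)|\ge 2|v|$ for every $k$, hence $\ell_n\ge 2n|v|\to\infty$ and $\ell_{n+1}-\ell_n>|v|$ for all $n$. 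The core step is to show that $\sigma^p(u)_i=\sigma^p(u)_{i+|v|}$ for every $i\in\Z_+$. Given such an $i$, take the largest $n$ with $\ell_n\le i$, so that $\ell_n\le i<\ell_{n+1}$ and therefore $i+|v|<\ell_{n+1}+|\sigma^p(u_{n+1})|=\ell_{n+2}$; thus $i$ and $i+|v|$ both lie in the \emph{single} block $[\ell_n,\ell_{n+2})$ filled by $\sigma^p(w_n)$. Now I invoke the first condition, $\sigma^p(w_n)\prec v^N$: fix one occurrence of $\sigma^p(w_n)$ inside $v^N$. Since $v^N$ has period $|v|$, within that occurrence the letter $\sigma^p(u)_j$ depends on $j$ only through $j$ modulo $|v|$, for $j$ ranging over the block $[\ell_n,\ell_{n+2})$. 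As $i$ and $i+|v|$ are congruent modulo $|v|$ and both lie in that block, we conclude $\sigma^p(u)_i=\sigma^p(u)_{i+|v|}$, as wanted.

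Since this holds for all $i\in\Z_+$, the sequence $\sigma^p(u)$ equals $z^\infty$ with $z=\sigma^p(u)_{[0,|v|)}$, so it is periodic; combined with $\sigma^p(u)=u$ this contradicts the aperiodicity of $\sigma$, as noted above. The only point I expect to require care is the index bookkeeping in the core step --- verifying that the pair $\{i,i+|v|\}$ never straddles the boundary between two consecutive blocks $\sigma^p(w_n)$ and that all the indices into $v^N$ that appear are legitimate --- and it is precisely the size condition $2|v|\le\min_{a\in A}|\sigma^p(a)|$ that guarantees this. Observe, moreover, that one needs no compatibility between the occurrences of the various $\sigma^p(w_n)$ inside $v^N$: each $i$ is handled with its own occurrence. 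Consequently no Fine--Wilf type uniqueness-of-phase statement, and in fact not even the primitivity of $v$, enters the argument.
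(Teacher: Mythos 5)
Your proof is correct, and it takes a genuinely different --- and more elementary --- route than the paper's. The paper follows Moss\'e: it writes each block $\sigma^p(u_i)=\alpha_i v^{n_i}\beta_i$ with $\alpha_i\prec_{\ssuf}v$, $\beta_i\prec_{\spref}v$ and $n_i\ge 1$ (the size condition $2|v|\le\min_a|\sigma^p(a)|$ rules out $n_i=0$), then observes that $v\beta_i\alpha_{i+1}v\prec v^N$ and invokes the synchronization property of primitive words (Moss\'e's Propri\'et\'e~2.3, essentially that a primitive word occurs in its square only at the trivial positions) to conclude that each junction word $\beta_i\alpha_{i+1}$ is itself a power of $v$, whence $u=\sigma^p(u)=\alpha_0vvv\cdots$. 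You instead propagate the local period directly: both $i$ and $i+|v|$ land inside a single two-letter block $\sigma^p(u_nu_{n+1})$, which inherits the period $|v|$ from $v^N$, so $u_i=u_{i+|v|}$ for every $i$. Your index bookkeeping is sound ($\ell_n\le i<\ell_{n+1}$ together with $|v|<|\sigma^p(u_{n+1})|$ gives $i+|v|<\ell_{n+2}$), and you are right that this dispenses with both the primitivity of $v$ and any Fine--Wilf-type uniqueness of phase; in fact you prove the marginally stronger statement obtained by deleting the word ``primitive''. What the paper's argument buys in exchange is the sharper structural conclusion $u=\alpha_0vvv\cdots$, i.e.\ that the period is $v$ itself rather than a priori only some word of length $|v|$; for the intended contradiction with aperiodicity that extra precision is not needed, so your shorter argument is a legitimate replacement.
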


\begin{proof}
For the sake of completeness, we give a proof. Of course, the idea is due to 
B.~Moss\'e~\cite{MR1168468}. Assume that there exists such a triple $N,p$ and $v$. Since 
$\sigma^p(u_i) \prec v^N$, there exist words $\alpha_i \prec_{\ssuf} v, \beta_i \prec_{\spref} v$ and 
$n_i \in \Z_+$ for which $\sigma^p(u_i)=\alpha_i v^{n_i} \beta_i$. If $n_i=0$, then 
$|\sigma^p(u_i)| < 2|v|$, which contradicts the hypothesis. Since 
\[
\sigma^p(u_iu_{i+1}) = \alpha_iv^{n_i}\beta_i \alpha_{i+1}v^{n_{i+1}}\beta_{i+1} \prec v^N,
\]
we see that $v\beta_i \alpha_{i+1}v \prec v^N$. This implies that $\beta_i \alpha_{i+1}$ is a power 
of $v$; see \cite[Propri\'et\'e~2.3]{MR1168468}. Hence, $u=\sigma^p(u)=\alpha_0 vvv \dots$ This is 
a contradiction as $\sigma$ is aperiodic. 
\end{proof}

\begin{lemma}[{\cite[Th\'eor\`eme~2.4]{MR1168468}}]\label{uppbndforpow}
If $n \in \N$ and $w^n \in \L(u)^+$, then $n < 2\lambda(g+1)C^2$.
\end{lemma}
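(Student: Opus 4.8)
The plan is to argue by contradiction, reducing everything to Lemma~\ref{suffcondperiodic}. To begin, I would pass to a primitive word: if $w=v^m$ with $v$ primitive and $m\ge1$, then $w^n=v^{mn}\in\L(u)^+$, so any upper bound on the exponent of a power of a \emph{primitive} word occurring in $u$ already bounds $mn$, hence $n$. Thus I may assume $w$ itself is primitive, and suppose for contradiction that $n\ge2\lambda(g+1)C^2$. The goal is to produce $N,p\in\N$ so that, with $v:=w$ and $N:=n$, \emph{both} conditions of Lemma~\ref{suffcondperiodic} are satisfied — $\sigma^p(w')\prec w^N$ for every $w'\in\L_2(u)$, and $2|w|\le\min_{a\in A}|\sigma^p(a)|=I_p$ — which contradicts that lemma.

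Second, I would choose the exponent $p$. Since $\lambda>1$ ($\sigma$ being aperiodic), let $p$ be least with $\lambda^p\ge 2C|w|$. By~\eqref{conseqPF}, $I_p\ge C^{-1}\lambda^p\ge 2|w|$, which secures the second condition; and minimality of $p$ gives $\lambda^{p-1}<2C|w|$, so, again by~\eqref{conseqPF}, $S_p\le C\lambda^p<2C^2\lambda|w|$. Thus $n|w|$ is a large multiple of $S_p$, of order $n/(2C^2\lambda)$ copies, which is the quantitative input for the next step.

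Third, I would verify the first condition. Fix an occurrence $w^n=u_{[s,s+n|w|)}$ in $u$; by uniform recurrence of $u$ we may take $s$ as large as we please. The natural $p$-cutting points lying in $[s,s+n|w|]$ delimit a maximal block $\sigma^p(u_{[i_0,i_0+k)})$ of $k$ consecutive $\sigma^p$-images contained in $u_{[s,s+n|w|)}$; since consecutive $p$-cutting points differ by at most $S_p$, the block loses at most one image at each end, so $kS_p> n|w|-2S_p$, and combined with $S_p<2C^2\lambda|w|$ this yields
\[
k>\frac{n|w|}{S_p}-2>\frac{n}{2C^2\lambda}-2 .
\]
With the naive choice of $p$ this forces $k\ge g+1$ once $n\ge4\lambda(g+1)C^2$, say; the role of Moss\'e's refinement in~\cite[Th\'eor\`eme~2.4]{MR1168468} is to do the end-count sharply enough that already $n\ge2\lambda(g+1)C^2$ gives $k\ge g+1$. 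Granting $k\ge g+1$, the definition of $g$ and uniform recurrence imply that every interval of $g+1$ consecutive positions of $u$ contains a full occurrence of each $w'\in\L_2(u)$, so every $w'\in\L_2(u)$ occurs as a length-two factor inside $u_{[i_0,i_0+k)}$, whence $\sigma^p(w')\prec\sigma^p(u_{[i_0,i_0+k)})\prec u_{[s,s+n|w|)}=w^N$. This contradicts Lemma~\ref{suffcondperiodic}, and therefore $n<2\lambda(g+1)C^2$.

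The main obstacle is precisely that sharpness in the third step: pinning down that the block of full $\sigma^p$-images inside $w^n$ has at least $g+1$ letters — just enough to display every two-letter factor of $u$ — while holding the constant down to $2\lambda(g+1)C^2$. One has to reckon carefully with the partial $\sigma^p$-image lost at each end of the block, with the floors coming from integrality, and with the exact number of consecutive letters of $u$ needed to exhibit all of $\L_2(u)$ (which is what $g$ encodes); this is the delicate computation that is the substance of~\cite[Th\'eor\`eme~2.4]{MR1168468}. Everything else — the reduction to a primitive $w$, the choice of $p$, the two uses of~\eqref{conseqPF}, and the final appeal to Lemma~\ref{suffcondperiodic} — is routine.
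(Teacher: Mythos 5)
Your strategy is essentially the paper's: reduce to a primitive $w$, choose the scale $p$ so that $2|w|\le I_p$, locate a run of at least $g+1$ consecutive full $\sigma^p$-images of letters inside the occurrence of $w^n$ so as to contradict Lemma~\ref{suffcondperiodic}, and convert lengths into powers of $\lambda$ via \eqref{conseqPF}. The only structural difference is bookkeeping: the paper fixes $p$ by $\tfrac12 I_{p-1}\le |w|<\tfrac12 I_p$ and then divides the inequality $n|w|<(g+1)S_p$ by $|w|\ge\tfrac12 I_{p-1}$, whereas you take $p$ minimal with $\lambda^p\ge 2C|w|$ and bound $S_p<2C^2\lambda|w|$; both routes give $S_p/|w|\le 2C^2\lambda$ and are interchangeable.

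The genuine gap is that you do not prove the bound as stated. Your block count $k>n|w|/S_p-2$ only yields $k\ge g+1$ once $n\ge 2\lambda(g+3)C^2$ (you yourself retreat to $4\lambda(g+1)C^2$), and you defer the missing improvement to an unspecified ``refinement'' of Moss\'e that you never carry out. That matters here: the constant $K$, and with it every computable index built later in the paper ($k$ in \eqref{def_of_k}, $p_0$, $L_0$, and so on), is defined from exactly the inequality $n<2\lambda(g+1)C^2$, so establishing the lemma with a larger constant is a different statement. What has to be supplied is precisely the step the paper compresses into one line, namely that $n|w|\ge(g+1)S_p$ already forces the hypotheses of Lemma~\ref{suffcondperiodic}; the two boundary blocks you discard in the naive end-count are the whole issue (and one must also track whether an interval of $g$ letters contains a {\em full} two-letter factor or merely its starting position, which is where the factor $2$ in the choice $g=2\max_a|\sigma^m(a)|$ is meant to help). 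Either that count is done sharply, or the constant in the statement --- and hence $K$ --- must be enlarged; pointing at the reference does not discharge it. Everything else in your argument is sound.
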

\begin{proof}
For the sake of completeness, we present a proof. Again, the idea is due to B.~Moss\'e~\cite{MR1168468}. 
Suppose that $w$ is a primitive word 
and $w^n \in \L(u)$ for some $n \in \N$. There exists $p \in \N$ for which 
\[
\frac{1}{2}\min_{a \in A}|\sigma^{p-1}(a)| \le |w| < \frac{1}{2}\min_{a \in A}|\sigma^p(a)|.
\]
Recall that $v \prec u_{[i,i+g)}$ for all words $v \in \L_2(u)$ and $i \in \Z_+$. Since 
$2|w| < \min_{a \in A}|\sigma^p(a)|$ and $\sigma$ is aperiodic, it follows from 
Lemma~\ref{suffcondperiodic} that 
\[
n|w|=|w^n| < (g+1)\max_{a \in A}|\sigma^p(a)|.
\]
Hence,
\[
n < \frac{(g+1)\max_{a \in A}|\sigma^p(a)|}{\frac{1}{2}\min_{a \in A}|\sigma^{p-1}(a)|} \le 2(g+1)C^2
\lambda.
\]
The last inequality follows from \eqref{conseqPF}. 
\end{proof}

\begin{lemma}\label{comlexity}
$\sharp \L_n(u) \le \lambda C^2 (\# A)^2 n$ for every $n \in \N$.
\end{lemma}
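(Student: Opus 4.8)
The plan is to estimate $\sharp\L_n(u)$ by cutting the fixed point $u=\sigma^p(u_0)\sigma^p(u_1)\sigma^p(u_2)\cdots$ into the blocks $\sigma^p(u_i)$, for a power $p$ chosen in terms of $n$, and counting how many length-$n$ windows of this decomposition can be pairwise distinct. The governing observation is that if $n\le I_p$, then a window of length $n$ beginning inside the $i$-th block $\sigma^p(u_i)$ must end before the end of the $(i+1)$-st block --- its length is at most $I_p\le|\sigma^p(u_{i+1})|$ --- so every word of $\L_n(u)$ is a factor of $\sigma^p(u_iu_{i+1})$ for some $i\in\Z_+$, occurring at a position strictly smaller than $|\sigma^p(u_i)|$. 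This already gives $\sharp\L_n(u)\le\sharp\L_2(u)\cdot S_p\le(\# A)^2 S_p$, because $u_iu_{i+1}$ ranges over $\L_2(u)$ while each block $\sigma^p(u_i)$ offers at most $|\sigma^p(u_i)|\le S_p$ admissible starting positions.

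It then remains to pick $p$ and feed \eqref{conseqPF} into this count. For $n=1$ (equivalently $n\le I_0=1$) the claimed bound is trivial, so one may assume $n\ge 2$ and take $p\ge 1$ to be the least integer with $I_p\ge n$; minimality forces $I_{p-1}<n$. Using \eqref{conseqPF} one obtains $\lambda^{p-1}<Cn$ --- from $C^{-1}\lambda^{p-1}\le I_{p-1}<n$ when $p\ge 2$, and trivially since $\lambda^{p-1}=1<2\le Cn$ when $p=1$ --- and then $S_p\le C\lambda^p=C\lambda\cdot\lambda^{p-1}<C^2\lambda n$; combining with the previous paragraph yields $\sharp\L_n(u)\le(\# A)^2 S_p<\lambda C^2(\# A)^2 n$, as desired.

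I do not expect any substantial difficulty; the only step requiring care --- and the main (modest) obstacle --- is the ``straddling'' claim, namely that a window of length $n\le I_p$ meets at most two consecutive blocks. I would record the block boundaries $\ell_i=\sum_{j<i}|\sigma^p(u_j)|$ and check that a window starting at $s$ with $\ell_i\le s<\ell_{i+1}$ ends at $s+n-1\le\ell_{i+1}+n-2<\ell_{i+1}+I_p\le\ell_{i+1}+|\sigma^p(u_{i+1})|=\ell_{i+2}$, so it lies inside $\sigma^p(u_iu_{i+1})$ and starts at position $s-\ell_i<|\sigma^p(u_i)|$ within it. Everything else is bookkeeping with \eqref{conseqPF}; in particular, this route to the linear complexity bound uses neither Lemma~\ref{uppbndforpow} nor the gap constant $g$.
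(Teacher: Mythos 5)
Your proof is correct and follows essentially the same route as the paper's (which explicitly follows Queff\'elec's Proposition V.19): choose $p$ with $I_{p-1}< n\le I_p$, note that every length-$n$ factor sits inside some $\sigma^p(ab)$ with $ab\in\L_2(u)$ at an offset below $|\sigma^p(a)|$, and turn the ratio of consecutive block lengths into $\lambda C^2$ via \eqref{conseqPF}. The only (cosmetic) difference is that you carry the intermediate bound $(\# A)^2 S_p$ where the paper writes $(\# A)^2 \min_{a\in A}|\sigma^p(a)|$; your version is the one the straddling count actually delivers, and both yield the same final constant.
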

\begin{proof}
This proof follows that of \cite[Proposition V.19]{MR924156}. Fix $n \in \N$. Find $p \in \N$ 
so that $\min_{a \in A}|\sigma^{p-1}(a)| \le n \le \min_{a \in A}|\sigma^p(a)|$. Then, 
\[
\# \L_n(u) \le (\# A)^2 \min_{a \in A}|\sigma^p(a)| \le (\# A)^2 \frac{\min_{a \in A}|\sigma^p(a)|}
{\min_{a \in A}|\sigma^{p-1}(a)|} n \le \lambda C^2 (\# A)^2 n. 
\]
This completes the proof. 
\end{proof}
See also \cite{ELR1,pansiot1} and \cite[Theorem 24]{MR1709427}. Set 
\[
K= \left\lceil \lambda C^2 \max \Set{ 2(g+1), (\# A)^2} \right\rceil. 
\]
Then, the fixed point $u$ of the substitution $\sigma$ is {\em $K$-power free}, in other words, it 
holds that if $v^N \in \L(u)$ and $N \ge K$ then $v = \Lambda$. Hence, the constant $K$ is inevitably 
greater than or equal to two. In general, if a uniformly recurrent 
sequence $v \in A^{\Z_+}$ is aperiodic, then given a word $w \in \L(v)^+$ there exists a constant 
$L \in \N$ for which $w^L \notin \L(v)$. However, the constant $L$ may depend on the choice of 
the word $w$. 

\section{B.~Moss\'e's characterization of the unilateral non-recognizability}

\begin{definition}\label{naturalcuttings}
\begin{enumerate}
\item\label{natural-p-cutting}
A finite sequence: 
\[
\Set{\alpha,\sigma^p(u_{i^\prime}),\sigma^p(u_{i^\prime+1}),\dots,
\sigma^p(u_{i^\prime+k-1}),\beta}
\]
of words over the alphabet $A$ is called a {\em natural $p$-cutting} of $u_{[i,i+\ell)}$ if 
\begin{itemize}
\item
$\alpha \prec_{\suf} \sigma^p(u_{i^\prime-1})$;
\item
$\beta \prec_{\pref} \sigma^p(u_{i^\prime+k})$;
\item 
$u_{[i,i+\ell)}=\alpha \sigma^p(u_{i^\prime}) \sigma^p(u_{i^\prime+1}) \dots 
\sigma^p(u_{i^\prime+k-1})\beta$, where $i$;
\item
$i+|\alpha| = |\sigma^p(u_{[0,i^\prime)})|$.
\end{itemize}
\item\label{awordhasthesamecutting}
If a word $w$ occurs at positions $i$ and $j$ in the sequence $u$, then the word $w$ is said to have 
the {\em same natural $p$-cutting} at the positions $i$ and $j$ if 
\[
\left(E_p \cap [i,i+|w|)\right) + (j-i) = E_p \cap [j,j+|w|),
\]
where $E+i=\Set{e+i|e \in E}$ if $E$ is a finite subset of $\Z_+$ and $i \in \Z_+$. 
\end{enumerate}
\end{definition}

Compare these definitions with the original ones in \cite[\S~3]{MR1168468}. 
We do not exclude the possibility that 
\[
\alpha = \sigma^p(u_{i^\prime-1}), \alpha = \Lambda, \beta = \sigma^p(u_{i^\prime+k}) 
\textrm{ or } \beta = \Lambda.
\]
Since we always require $k \ge 1$ in Definition~\ref{naturalcuttings}~\eqref{natural-p-cutting}, 
not every $u_{[i,i+\ell)}$ has a natural $p$-cutting. It is not necessary that a natural $p$-cutting 
is uniquely determined for given $i$ and $\ell$ in 
Definition~\ref{naturalcuttings}~\eqref{natural-p-cutting}. 

\renewcommand{\proofname}{Proof of Theorem~{\em \ref{necsuffcondunilrecog}}}
\begin{proof}

Put 
\begin{equation}\label{def_of_k}
k=C^4(K+1)+2C^2+1.
\end{equation}
To show the implication \eqref{non-recog} $\Rightarrow$ \eqref{charac}, 
assume that the substitution $\sigma$ is not unilaterally recognizable. 

{\bf Step~1.} It follows from  Lemma~\ref{1st_Step} below that for each $p \in \N$, there 
exist integers $i_p \in E_1$, $j_p \notin E_1$, $i_p^\prime,j_p^\prime \ge 0$, 
$h_p,\ell_p \ge 1$ and words $\alpha_p,\gamma_p^\prime \in A^\ast$, $\gamma_p \in A^+$ 
such that 
\begin{itemize}
\item
$u_{[i_p,i_p+\ell_p)} =u_{[j_p,j_p+\ell_p)}$;
\item
$u_{[i_p,i_p+\ell_p)}$ has a natural $p$-cutting: 
\[
\Set{\alpha_p,\sigma^p(u_{i_p^\prime}),\sigma^p(u_{i_p^\prime+1}),\dots,
\sigma^p(u_{i_p^\prime+k-1})};
\] 
\item
$u_{[j_p+|\alpha_p|,j_p+\ell_p)}$ has a natural $p$-cutting:
\[
\Set{\gamma_p,\sigma^p(u_{j_p^\prime}),\sigma^p(u_{j_p^\prime+1}),\dots,
\sigma^p(u_{j_p^\prime+h_p-1}),\gamma_p^\prime}.
\]
\end{itemize}
Set 
\[
m_p = \min \Set{m \in \N|\alpha_p \gamma_p \prec_{\suf} 
\sigma^p(u_{[j_p^\prime-m,j_p^\prime)})}.
\]
Since 
\[
(m_p-1)I_p \le |\sigma^p(u_{[j_p^\prime-m_p+1,j_p^\prime)})| < |\alpha_p \gamma_p| 
\le 2S_p,
\]
we obtain that for all $p \in \N$, 
\[
m_p < 2C^2+1.
\]
Since
\[
h_p I_p \le |\gamma_p \sigma^p(u_{[j_p^\prime,j_p^\prime+h_p)})\gamma_p^\prime| 
 = |\sigma^p(u_{[i_p^\prime,i_p^\prime+k)}| \le k S_p
\]
and
\[(h_p + 2)S_p \ge |\gamma_p \sigma^p(u_{[j_p^\prime,j_p^\prime+h_p)})\gamma_p^\prime| 
= |\sigma^p(u_{[i_p^\prime,i_p^\prime+k)}| \ge k I_p,
\]
we obtain that for all $p \in \N$,
\[
kC^{-2}-2 \le h_p \le kC^2.
\]
It follows that a set:
\[
\Set{(m_p,h_p,u_{[i^\prime_p-1,i^\prime_p+k)},u_{[j_p^\prime-m_p,j_p^\prime+h_p]})|p \in \N}
\]
has a finite cardinality. Hence, the pigeonhole principle implies that for some infinite set 
$I \subset \N$, a set:
\[
\Set{(m_p,h_p,u_{[i^\prime_p-1,i^\prime_p+k)},u_{[j_p^\prime-m_p,j_p^\prime+h_p]})|\ p \in I} 
\]
is a singleton. It allows us to put $m = m_p$ and $h = h_p$ for any $p \in I$. 

{\bf Step~2.}
Let $p,q \in I$ with $p < q$ be arbitrary. We have two natural $q$-cuttings: 
\begin{equation}\label{nat_q_cut_1}
\Set{\gamma_q,\sigma^q(u_{j_q^\prime}),\sigma^q(u_{j_q^\prime+1}), \dots, 
\sigma^q(u_{j_q^\prime+h-1}),\gamma_q^\prime}
\end{equation}
of a word occurring at the position $j_q+|\alpha_q|$ and 
\begin{equation}\label{nat_q_cut_2}
\Set{\sigma^{q-p}(\gamma_p),\sigma^q(u_{j_q^\prime}),\sigma^q(u_{j_q^\prime+1}), \dots, 
\sigma^q(u_{j_q^\prime+h-1}),\sigma^{q-p}(\gamma_p^\prime)}
\end{equation}
of a word occurring at the position $j_q+|\alpha_q \gamma_q|-|\sigma^{q-p}(\gamma_p)|$. It would be 
worthwhile observing that $\sigma^{q-p}(\gamma_p) \prec_{\suf} \sigma^q(u_{j_q^\prime-1})$ and 
$\sigma^{q-p}(\gamma_p^\prime) \prec_{\pref} \sigma^q(u_{j_q^\prime+h_q})$. 
Assume that the natural $q$-cuttings \eqref{nat_q_cut_1} and \eqref{nat_q_cut_2} are different. Then, 
one of the inequalities $|\gamma_q| \ne |\sigma^{q-p}(\gamma_p)|$ and 
$|\gamma_q^\prime| \ne |\sigma^{q-p}(\gamma_p^\prime)|$ follows.

Consider the case $|\gamma_q| > |\sigma^{q-p}(\gamma_p)|$. 
Since 
\begin{align*}
\gamma_q\sigma^q(u_{[j_q^\prime,j_q^\prime+h)}) \gamma_q^\prime &=
\sigma^q(u_{[i_q^\prime,i_q^\prime+k)}) \\ 
&= \sigma^{q-p}(\sigma^p(u_{[i_p^\prime,i_p^\prime+k)})) \\
& = \sigma^{q-p}(\gamma_p \sigma^p(u_{[j_p^\prime,j_p^\prime+h)}) \gamma_p^\prime) \\
& = \sigma^{q-p}(\gamma_p)\sigma^q(u_{[j_q^\prime,j_q^\prime+h)})\sigma^{q-p}(\gamma_p^\prime),
\end{align*}
a power $v^N$ of a nonempty word $v \prec_{\ssuf} \gamma_q$ occurs in 
$\sigma^q(u_{[j_q^\prime,j_q^\prime+h)})$ as a prefix. 
By using the fact that $v \prec_{\suf} \sigma^q(u_{j_q^\prime-1})$, we can see that 
\begin{align}
\max \Set{N \in \N| v^N \prec_{\pref} \sigma^q(u_{[j_q^\prime,j_q^\prime+h)}) } & \ge 
\frac{h I_q}{S_q} -1 \label{estimate_max_power} \\ 
& \ge (kC^{-2}-2)C^{-2}-1 \notag \\
& = K + C^{-4} \notag \\
& > K, \notag 
\end{align}
where the equality follows from \eqref{def_of_k}. This contradicts the $(K+1)$-power freeness of the 
sequence $u$, i.e.\ Lemma~\ref{uppbndforpow}. The same contradiction 
emerging in the other cases, we conclude that for any $p,q \in I$ with $p < q$, 
\[
\gamma_q=\sigma^{q-p}(\gamma_p).
\]

{\bf Step~3.} 
Choose integers $p < q$ in $I$ so that 
\[
|\sigma^{q-p-1}(\gamma_p)| \ge L.
\]
Observe how $u_{[i_q^\prime-1,i_q^\prime+k)}$ goes to 
$\sigma^q(u_{[i_q^\prime-1,i_q^\prime+k)})$ via 
$\sigma^p(u_{[i_q^\prime-1,i_q^\prime+k)})$; see~Figure~\ref{pic_i_0-p-q}. Since 
$\gamma_p \prec_{\pref} \sigma^p(u_{[i_q^\prime,i_q^\prime+k)})$, $\gamma_q \prec_{\pref} 
\sigma^q(u_{[i_q^\prime,i_q^\prime+k)})$ and $\sigma^{q-p}(\gamma_p) = \gamma_q$, we can see 
that 
$u_{[i_q+|\alpha_q|,i_q+|\alpha_q\gamma_q|)}=\gamma_q$ has a natural $1$-cutting: 
\begin{equation}\label{ntrl_1_cttng_iq_gammaq}
\Set{\sigma(u_{i^{\prime \prime}}),\sigma(u_{i^{\prime \prime}+1}), \dots, 
\sigma(u_{i^{\prime \prime}+ \vert \sigma^{q-p-1}(\gamma_p) \vert -1})},
\end{equation}
where $i^{\prime \prime}=\left|\sigma^{q-1}\left(u_{\left[0,i_q^\prime\right)}\right)\right|$. 
Remark that
\begin{equation}\label{ancestor_of_gammaq_at_iq}
u_{\left[i^{\prime \prime},i^{\prime \prime}+\left|\sigma^{q-p-1}(\gamma_p)\right|\right)}
=\sigma^{q-p-1}(\gamma_p).
\end{equation}
\begin{figure}
\begin{center}
\includegraphics[scale=1]{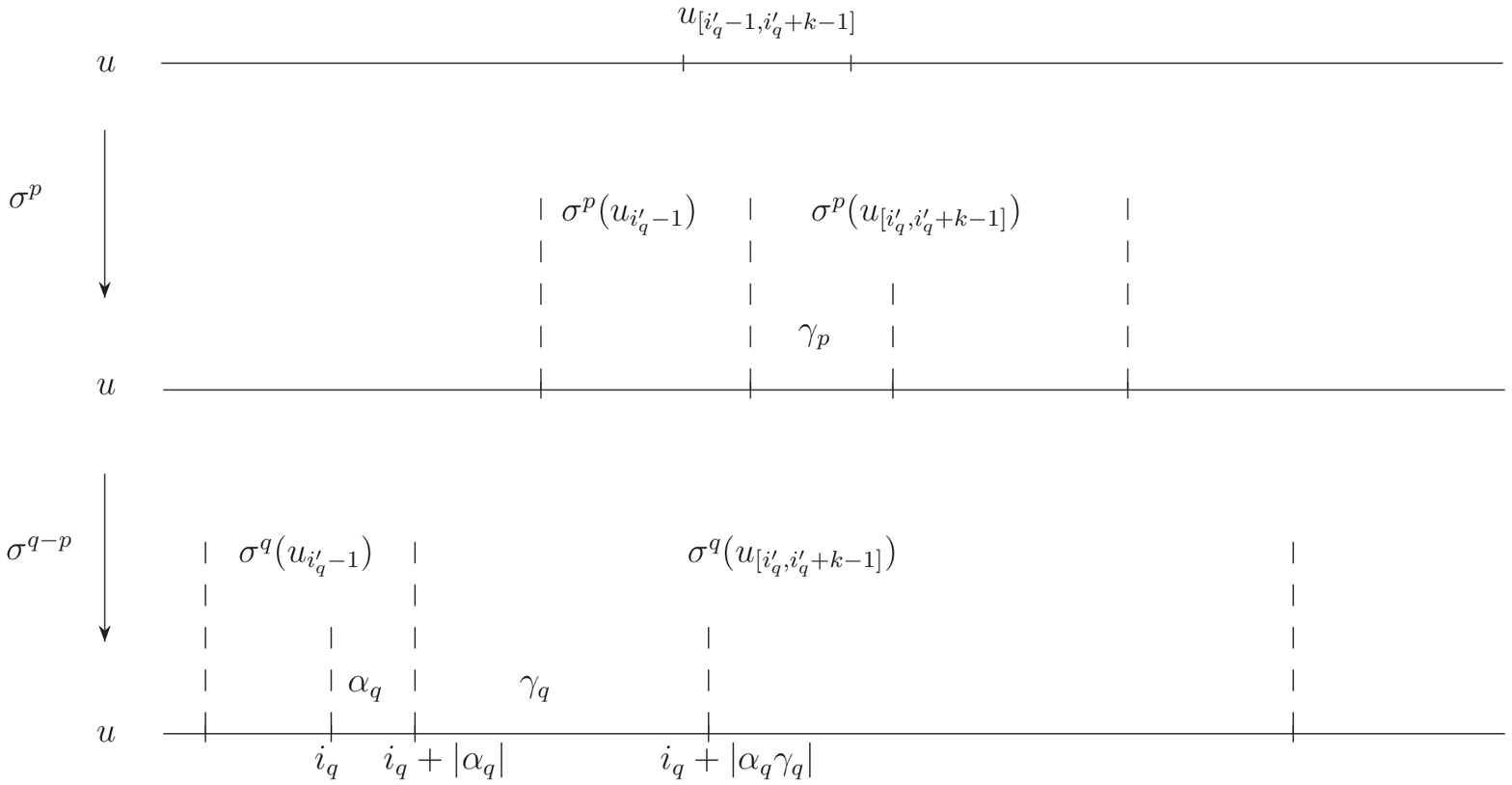}
\end{center}
\caption{}\label{pic_i_0-p-q}
\end{figure}

Then, observe how $u_{[j_q^\prime-m,j_q^\prime+h]}$ goes 
to $\sigma^q(u_{[j_q^\prime-m,j_q^\prime+h]})$ via 
$\sigma^p(u_{[j_q^\prime-m,j_q^\prime+h]})$; see Figure~\ref{pic_j_0-p-q}. 
Recalling that the natural $q$-cuttings \eqref{nat_q_cut_1} and \eqref{nat_q_cut_2} are the 
same, we can see that 
$u_{[j_q+|\alpha_q|,j_q+|\alpha_q\gamma_q|)}=\gamma_q$ has a natural $1$-cutting:
\begin{equation}\label{ntrl_1_cttng_jq_gammaq}
\Set{\sigma(u_{j^{\prime \prime}}),\sigma(u_{j^{\prime \prime}+1}), \dots, 
\sigma(u_{j^{\prime \prime}+ \vert \sigma^{q-p-1}(\gamma_p) \vert -1})},
\end{equation}
where 
$j^{\prime \prime}=
\left|\sigma^{q-p-1}
\left(u_{\left[0,\left|\sigma^p(u_{[0,j_q^\prime)})\right|-\left|\gamma_p\right|\right)}\right)\right|$. 
Remark that 
\begin{equation}\label{ancestor_of_gammaq_at_jq}
u_{[j^{\prime \prime},j^{\prime \prime}+|\sigma^{q-p-1}(\gamma_p)|)}=\sigma^{q-p-1}(\gamma_p).
\end{equation}
\begin{figure}
\begin{center}
\includegraphics[scale=1]{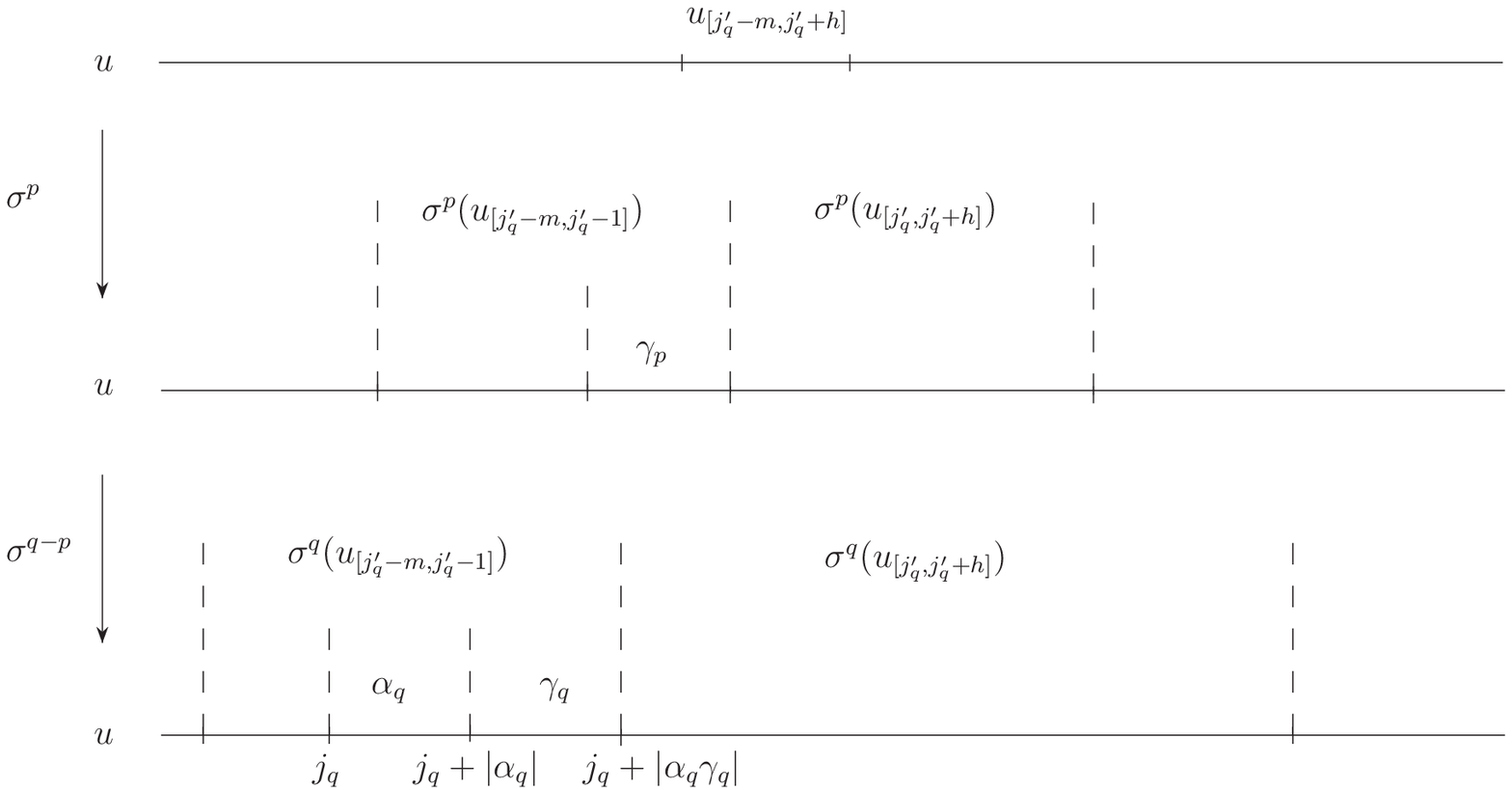}
\end{center}
\caption{}\label{pic_j_0-p-q}
\end{figure}
We are finally in a situation that 
\begin{itemize}
\item
$\alpha_q\gamma_q$ occurs at the positions $i_q \in E_1$ and $j_q \notin E_1$ in $u$;
\item
$\gamma_q$ has the same natural $1$-cutting at the positions $i_q+|\alpha_q|$ and $j_q+|\alpha_q|$; 
recall \eqref{ntrl_1_cttng_iq_gammaq} and \eqref{ntrl_1_cttng_jq_gammaq};
\item
all of the positions $i_q+|\alpha_q|,i_q+|\alpha_q\gamma_q|,j_q+|\alpha_q|$ and 
$j_q+|\alpha_q\gamma_q|$ are natural $1$-cutting points;
\item
the same natural $1$-cutting of $\gamma_q$ consists of at least $L$ words.
\end{itemize}
Actually, the second and third conditions are implied by a stronger statement that the same 
$1$-cutting of $\gamma_q$ at the positions $i_q+|\alpha_q|$ and $j_q+|\alpha_q|$ comes from the same 
word \eqref{ancestor_of_gammaq_at_iq} and \eqref{ancestor_of_gammaq_at_jq}, in other words, the 
word $\gamma_q$ has the same ancestor word \eqref{ancestor_of_gammaq_at_iq} and 
\eqref{ancestor_of_gammaq_at_jq} at the positions. We will again encounter this kind of fact in the 
proof of ``local unique composition property''~(Lemma~\ref{bil_recog_index}). 

We reach the desired positions $i,j \in \Z_+$ by executing the following procedure in this order:
\begin{enumerate}[(P.~1)]
\item
Set $\ell = i_q+|\alpha_q|$ and $m=j_q + |\alpha_q|$. 
\item\label{nearest}
Let $\ell^\prime < \ell$ and $m^\prime < m$ be natural $1$-cutting 
points which are nearest to $\ell$ and $m$ respectively. 
\item\label{case_equal}
If $\ell - \ell^\prime = m-m^\prime$, then set $\ell = \ell^\prime$ and $m = m^\prime$ and 
go back to (P.~\ref{nearest}). 
\item
In this step, we have that $\ell - \ell^\prime \ne m-m^\prime$. The desired positions $i$ and 
$j$ are determined by the facts that 
\begin{enumerate}
\item
$\ell - \ell^\prime < m - m^\prime$ $\Rightarrow$ $|\sigma(u_{[0,j)})|=\ell^\prime$ and 
$|\sigma(u_{[0,i)})|=m^\prime$;
\item
$m - m^\prime < \ell - \ell^\prime$ $\Rightarrow$ $|\sigma(u_{[0,j)})|=m^\prime$ and 
$|\sigma(u_{[0,i)})|=\ell^\prime$.
\end{enumerate}
\end{enumerate}
The loop between (P.~\ref{nearest}) and (P.~\ref{case_equal}) continues up to 
$\left\lceil |\alpha_q\gamma_q|/I_1 \right\rceil$ times. 
\end{proof}

\renewcommand{\proofname}{Proof}

\begin{lemma}\label{1st_Step}
Let $k \ge 3C^2$ be an integer. If the substitution $\sigma$ is not recognizable, then for each 
$p \in \N$ there exist integers $i_p \in E_1,j_p \notin E_1,i_p^\prime,j_p^\prime \ge 0,
h_p, \ell_p \ge 1$ and words $\alpha_p,\gamma_p^\prime \in A^\ast, \gamma_p \in A^+$ such 
that 
\begin{itemize}
\item
$u_{[i_p,i_p+\ell_p)}=u_{[j_p,j_p+\ell_p)};$
\item
$u_{[i_p,i_p+\ell_p)}$ has a natural $p$-cutting$:$
\[
\Set{\alpha_p,\sigma^p(u_{i_p^\prime}),\sigma^p(u_{i_p^\prime+1}),\dots,
\sigma^p(u_{i_p^\prime+k-1})};
\]
\item
$u_{[j_p+|\alpha_p|,j_p+\ell_p)}$ has a natural $p$-cutting$:$
\[
\Set{\gamma_p,\sigma^p(u_{j_p^\prime}),\sigma^p(u_{j_p^\prime+1}),\dots,
\sigma^p(u_{j_p^\prime+h_p-1}),\gamma_p^\prime}.
\]
\end{itemize}
\end{lemma}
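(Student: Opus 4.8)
The plan is to unwind the failure of unilateral recognizability at the very first level and then lift it to level $p$. Since $\sigma$ is not unilaterally recognizable, for every $L$ there are positions $i \in E_1$ and $j \notin E_1$ with $u_{[i,i+L)} = u_{[j,j+L)}$. The idea is to take $L$ large enough (depending on $p$ and $k$) that the common factor $u_{[i,i+L)}$ contains, starting just past its first natural $p$-cutting point on the $i$-side, a full block of $k$ consecutive images $\sigma^p(u_{i'})\sigma^p(u_{i'+1})\cdots\sigma^p(u_{i'+k-1})$. Concretely I would first locate $i' \ge 0$ with $i + |\alpha_p| = |\sigma^p(u_{[0,i')})|$ for the appropriate suffix $\alpha_p \prec_{\suf}\sigma^p(u_{i'-1})$ — this $i'$ exists because $i \in E_1 \subseteq E_p$, so $i$ is itself a natural $p$-cutting point and $\alpha_p$ can even be taken empty, or one simply takes the first $p$-cutting point $\ge i$ — and then demand $L \ge |\alpha_p| + kS_p$ so that the $k$ images fit inside the common factor. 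This gives the first natural $p$-cutting in the statement, and $\ell_p$ is just set to this $L$.

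For the $j$-side, the same factor $u_{[j+|\alpha_p|, j+\ell_p)}$ (which equals $u_{[i+|\alpha_p|, i+\ell_p)}$) must also admit a natural $p$-cutting: one reads off the desargambiguation from the position $j + |\alpha_p|$, writing $u_{[j+|\alpha_p|, j+\ell_p)} = \gamma_p \sigma^p(u_{j'})\cdots\sigma^p(u_{j'+h_p-1})\gamma_p'$, where $\gamma_p \prec_{\spref}$ the relevant image (it is nonempty precisely because $j \notin E_1$, hence $j \notin E_p$, hence $j + |\alpha_p|$ is not a $p$-cutting point — wait, more care is needed here: one actually wants $\gamma_p$ nonempty, and the cleanest way is to observe that if $\gamma_p$ were empty then $j + |\alpha_p|$ would be a natural $p$-cutting point; since $i + |\alpha_p|$ is one and they are corresponding positions in equal factors, that alone is not contradictory, so instead I would push the argument down to level $1$ in Step~3 where $j \notin E_1$ is used). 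The point of the lemma is just to produce \emph{some} valid data, so I would argue: a natural $p$-cutting of the infinite tail starting at $j + |\alpha_p|$ certainly exists (every position has a first $p$-cutting point to its right, and between consecutive ones sits exactly one image $\sigma^p(\text{letter})$), and truncating it to the window $[j+|\alpha_p|, j+\ell_p)$ yields the stated form with $h_p \ge 1$ provided $\ell_p$ is large enough to contain at least one full image, which the bound $L \ge |\alpha_p| + kS_p \ge kI_p \ge I_p$ guarantees.

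The role of the hypothesis $k \ge 3C^2$ is to make the counting in Step~1 of the theorem's proof go through — it ensures $h_p \ge kC^{-2} - 2 \ge 3 - 2 = 1$ — so in the lemma I need only check that the construction yields $h_p \ge 1$, which follows as soon as the window $[j+|\alpha_p|, j+\ell_p)$ is long enough to strictly contain a complete image block; since $|\gamma_p \gamma_p'| < 2S_p$ and $\ell_p - |\alpha_p| = |\sigma^p(u_{[i',i'+k)})| \ge kI_p \ge kC^{-2}S_p \ge 3S_p > 2S_p$, at least one image $\sigma^p(u_{j'+t})$ must fit, so $h_p \ge 1$. The main obstacle, and the only genuinely delicate point, is the bookkeeping that the two natural $p$-cuttings describe \emph{the same} underlying word $u_{[i,i+\ell_p)} = u_{[j,j+\ell_p)}$ shifted by $j - i$, i.e.\ that $\gamma_p \sigma^p(u_{[j',j'+h_p)}) \gamma_p' = \sigma^p(u_{[i',i'+k)})$ as words; this is immediate from the definitions once one is careful that the left endpoint $i+|\alpha_p|$ of the $i$-side block and the left endpoint $j+|\alpha_p|$ of the $j$-side window are genuinely corresponding positions in the equal factors, which they are by construction since $|\alpha_p|$ is measured from $i$ and from $j$ respectively. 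I would therefore spend most of the write-up fixing notation for the first $p$-cutting point to the right of a given position and verifying these alignment identities, and keep the length estimates (all of the form $I_p \le |\sigma^p(a)| \le S_p$) brief.
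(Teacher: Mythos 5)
Your strategy is the same as the paper's: take a common factor of length greater than $(k+2)S_p$ occurring at positions $i_p\in E_1$ and $j_p\notin E_1$, read off a natural $p$-cutting on the $i_p$-side, truncate it to exactly $k$ full images, and use $kI_p\ge kC^{-2}S_p\ge 3S_p$ (this is where $k\ge 3C^2$ enters) to see that the corresponding window on the $j_p$-side admits a natural $p$-cutting with $h_p\ge1$. Three local points need repair, however. First, the inclusion you invoke is backwards: the paper records $E_q\subsetneq E_p$ for $q>p$, so $E_p\subsetneq E_1$ and $i_p\in E_1$ does \emph{not} place $i_p$ in $E_p$; you cannot take $\alpha_p=\Lambda$ in general. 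Your fallback (take the first natural $p$-cutting point to the right of $i_p$, with $\alpha_p$ the intervening suffix of $\sigma^p(u_{i_p'-1})$) is the correct move and is what the paper does; since $|\alpha_p|\le S_p$ always, fixing the common length to exceed $(k+2)S_p$ in advance also removes the circularity of a length bound that depends on the not-yet-known $|\alpha_p|$. Second, $\ell_p$ cannot be ``set to this $L$'': the first natural $p$-cutting in the statement carries no trailing word $\beta_p$, so the window must end exactly at the $k$-th cutting point, i.e.\ $\ell_p=|\alpha_p|+|\sigma^p(u_{[i_p',i_p'+k)})|$ --- the formula you yourself use later and which contradicts your earlier choice.

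Third, and most substantively, you verify $h_p\ge1$ but never that $\gamma_p\in A^+$, which the statement requires and which is used downstream (to define $m_p$ in Step~1 and to form $\sigma^{q-p-1}(\gamma_p)$ in Step~3). As you correctly observe, $j_p\notin E_1$ tells you nothing directly about the position $j_p+|\alpha_p|$, so nonemptiness of $\gamma_p$ is not automatic. The resolution is that Definition~\ref{naturalcuttings} does not require $\gamma_p$ to be a \emph{strict} suffix of $\sigma^p(u_{j_p'-1})$: if $j_p+|\alpha_p|$ happens to be a natural $p$-cutting point, one simply takes $\gamma_p=\sigma^p(u_{j_p'-1})$ in its entirety (shifting $j_p'$ by one), and the window, being of length at least $3S_p$, still accommodates $h_p\ge1$ full images afterwards. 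With these repairs your argument coincides with the paper's proof of Lemma~\ref{1st_Step}.
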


\begin{proof}
Fix an integer $m_p$ with 
\[
m_p > (k+2)S_p.
\]
Since $\sigma$ is not recognizable, there exist integers $i_p \in E_1$ and $j_p \notin E_1$ 
such that $u_{[i_p,i_p+m_p)}=u_{[j_p,j_p+m_p)}$.
The choice of $m_p$ guarantees that $u_{[i_p,i_p+m_p)}$ has a natural $p$-cutting, say 
\begin{equation*}\label{original_p_cutting}
\{\alpha_p,\sigma^p(u_{i_p^\prime}),\sigma^p(u_{i_p^\prime+1}),\dots,
\sigma^p(u_{i_p^\prime+k_p-1}),\beta_p\}.
\end{equation*}
Since 
\[
k_p \ge \dfrac{m_p}{S_p}-2>k,
\]
we can see that $u_{[i_p,i_p+\ell_p)}$ has a natural $p$-cutting: 
\[
\Set{\alpha_p,\sigma^p(u_{i_p^\prime}),\sigma^p(u_{i_p^\prime+1}), \dots, 
\sigma^p(u_{i_p^\prime+k-1})},
\]
where $\ell_p=|\alpha_p\sigma^p(u_{[i_p^\prime,i_p^\prime+k)})|$. Since 
\begin{align*}
\ell_p-|\alpha_p| \ge k I_p \ge k C^{-1}\lambda^p 
\ge kC^{-2} S_p \ge 3 S_p,
\end{align*}
$u_{[j_p+|\alpha_p|,j_p+\ell_p)}$ has a natural $p$-cutting: 
\[
\Set{\gamma_p,\sigma^p(u_{j_p^\prime}),\sigma^p(u_{j_p^\prime+1}),\dots,
\sigma^p(u_{j_p^\prime+h_p-1}),\gamma_p^\prime}
\]
with $\gamma_p \ne \Lambda$. 
\end{proof}

\section{An algorithm for the unilateral non-recognizability}
Following Property~(6) in \cite{Sol}, we make a definition:
\begin{definition}
We say that the substitution $\sigma$ has the {\em local unique composition property} under an index 
$L \in \N$ if 
\begin{itemize}
\item
the substitution $\sigma$ is bilaterally recognizable under the index $L$;
\item
if $u_{[i-L,i+L)}=u_{[j-L,j+L)}, |\sigma(u_{[0,m)})| \le i < |\sigma(u_{[0,m+1)})| \textrm{ and } 
|\sigma(u_{[0,n)})| \le j < |\sigma(u_{[0,n+1)})|$ then $u_m = u_n$.
\end{itemize}
\end{definition}
See also \cite[Th\'eor\`eme~2]{MR1414542} and \cite[Theorem~11]{MR1709427}. 
None has given a computable value of an index under which the local unique composition property holds, 
which is now done by the following lemma. The lemma excluding the computability is due to 
\cite[Th\'eor\`eme~3.1 bis.]{MR1168468}, though the theorem do not mention any decidability of the 
index $L$. 

\begin{lemma}\label{bil_recog_index}
The aperiodic, primitive substitution $\sigma$ has the local unique composition property under an 
index $L_0=\{C^4(K+1)+2C^2+1\}S_{p_0}$, where 
\begin{equation}\label{p0}
p_0 = K^2 \left\{C^4(K+1)+2C^2+1 \right\} \times \sum_{n=C^2(K+1)+2}^{(K+1)C^6+2C^4+C^2+2} 
n + 1. 
\end{equation}
\end{lemma}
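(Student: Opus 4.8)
The plan is to run the same three-step machinery that proves Theorem~\ref{necsuffcondunilrecog}, but in the bilateral setting and with every quantity that was merely "extracted by the pigeonhole principle" replaced by an explicit range, so that the infinite index set $I \subset \N$ gets replaced by a single concrete pair $p_0-1 < p_0$ (or rather: we only need to reach level $p_0$). First I would suppose, for contradiction, that $\sigma$ fails the local unique composition property under the index $L_0$; in particular there are positions $i \ne j$ with $u_{[i-L_0,i+L_0)} = u_{[j-L_0,j+L_0)}$ witnessing a failure of bilateral recognizability or of the letter-matching condition. As in Step~1 of the proof of Theorem~\ref{necsuffcondunilrecog}, for each $p$ with $1 \le p \le p_0$ one lifts this coincidence to a natural $p$-cutting: one gets $\alpha_p, \gamma_p, \gamma_p'$ and indices with $\gamma_p \in A^+$, the block $\sigma^p(u_{[i_p',i_p'+k)})$ equal to $\gamma_p \sigma^p(u_{[j_p',j_p'+h_p)}) \gamma_p'$, and the same a-priori bounds $m_p < 2C^2+1$ and $kC^{-2}-2 \le h_p \le kC^2$ where $k = C^4(K+1)+2C^2+1$ is as in \eqref{def_of_k}. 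The key bookkeeping point is that the "type" of the configuration at level $p$ — the tuple $(m_p, h_p, u_{[i_p'-1, i_p'+k)}, u_{[j_p'-m_p, j_p'+h_p]})$ — ranges over a set whose cardinality I can bound \emph{explicitly}: $m_p$ and $h_p$ have $O(1)$ many values, and by Lemma~\ref{comlexity} the number of words of the relevant lengths (which run between $C^2(K+1)+2$ and $(K+1)C^6+2C^4+C^2+2$, matching the summation in \eqref{p0}) is at most $\lambda C^2(\#A)^2$ times the length; absorbing $\lambda$ crudely by $K$ gives the factor $K^2$ and the summation $\sum_{n=C^2(K+1)+2}^{(K+1)C^6+2C^4+C^2+2} n$, whence among $p \in \{1,\dots,p_0\}$ two levels $p < q$ share a type.

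Next I would run Step~2 verbatim: from two natural $q$-cuttings of the same block — one coming from the level-$q$ data, the other from applying $\sigma^{q-p}$ to the level-$p$ data — a discrepancy $|\gamma_q| \ne |\sigma^{q-p}(\gamma_p)|$ would force a word $v \in A^+$ with $v^N \prec_{\pref} \sigma^q(u_{[j_q',j_q'+h)})$ and $N \ge (kC^{-2}-2)C^{-2}-1 = K + C^{-4} > K$, contradicting the $(K+1)$-power freeness of $u$ (Lemma~\ref{uppbndforpow}); hence $\gamma_q = \sigma^{q-p}(\gamma_p)$, and in particular $\gamma_q$ is long: $|\gamma_q| \ge |\sigma^{q-p-1}(\gamma_p)| \cdot I_1 \ge \dots$, and more crudely $|\gamma_q| \ge S_{p_0}$-worth of level-$1$ cutting blocks once the gap $q-p$ is at least one, which is where the outer factor $\{C^4(K+1)+2C^2+1\}S_{p_0} = L_0$ enters. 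Then Step~3 is adapted to the \emph{bilateral} situation: $\gamma_q$ (equivalently, its common ancestor $\sigma^{q-p-1}(\gamma_p)$, cf.\ \eqref{ancestor_of_gammaq_at_iq}–\eqref{ancestor_of_gammaq_at_jq}) occurs at the two positions $i_q+|\alpha_q|$ and $j_q+|\alpha_q|$ with the \emph{same} natural $1$-cutting, and its $1$-cutting consists of at least $L_0$ words, so it contains a two-sided window of radius $L_0$ around a natural $1$-cutting point at one occurrence and around a non-cutting point at the other — contradicting the choice of $L_0$ as a bilateral recognizability index, or, for the second clause of the definition, contradicting the letter-matching property at the two occurrences. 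This yields both bullet points in the definition of the local unique composition property.

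The main obstacle, and the part that needs the most care, is the explicit counting that produces \eqref{p0}: one must check that the lengths of the words $u_{[i_p'-1,i_p'+k)}$ and $u_{[j_p'-m_p,j_p'+h_p]}$ that occur really do lie in the interval $[\,C^2(K+1)+2,\ (K+1)C^6+2C^4+C^2+2\,]$ claimed — this uses $k = C^4(K+1)+2C^2+1$, the bounds $kC^{-2}-2 \le h_p \le kC^2$, $m_p < 2C^2+1$, and \eqref{conseqPF} — and that $\lambda \le K$ (immediate from the definition of $K$, since $K \ge \lceil \lambda C^2 \cdot 1\rceil \ge \lambda$), so that Lemma~\ref{comlexity} gives the clean bound $\#\L_n(u) \le K \cdot C^2 (\#A)^2 n \le K^3 n$ — wait, one should be careful to keep the constants as stated; I would track $\#\L_n(u) \le \lambda C^2(\#A)^2 n$ and bound the product of the two word-counts together with the $O(1)$ choices of $(m_p,h_p)$ by $K^2\{C^4(K+1)+2C^2+1\}\sum n$, checking that $(\#A)^2$, $C^2$, $\lambda$ and the count of $(m_p,h_p)$ pairs are all swallowed. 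A secondary subtlety is the bilateral adaptation of Step~1: one must verify that a two-sided coincidence of radius $L_0$ at the original positions propagates, after lifting by $\sigma^p$, to natural $p$-cuttings that agree on a block of $k$ consecutive level-$p$ images \emph{and} that the "$\alpha_p$ on both sides" bookkeeping still closes — this is where being careful that $\gamma_p \ne \Lambda$ (as in Lemma~\ref{1st_Step}) matters, since it is $\gamma_p$, not $\alpha_p$, that is forced to grow and to carry the contradiction back to level $1$. Everything else is a transcription of the already-completed proof of Theorem~\ref{necsuffcondunilrecog}.
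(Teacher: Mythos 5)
Your proposal follows essentially the same route as the paper's proof: lift the coincidence of the two windows of radius $L_0$ to natural $p$-cuttings with $k=C^4(K+1)+2C^2+1$ blocks for every $p\le p_0$, use Lemma~\ref{comlexity} and the pigeonhole principle to find two levels $p<q$ carrying identical word data, invoke the $(K+1)$-power freeness to force $\gamma_q=\sigma^{q-p}(\gamma_p)$ and $\gamma_q^\prime=\sigma^{q-p}(\gamma_p^\prime)$, and conclude that both windows are $\sigma^{q-p}$-images of identical words and hence share the same natural $1$-cutting. Only the final phrasing should be tightened: that shared natural $1$-cutting directly yields both clauses of the local unique composition property, rather than ``contradicting the choice of $L_0$ as a bilateral recognizability index,'' which is the very thing being established.
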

\begin{proof}
Put $k=C^4(K+1)+2C^2+1$. Assume that $u_{[i-L_0,i+L_0)}=u_{[j-L_0,j+L_0)}$. The integer 
$L_0$ is so large that we can choose $\Set{m_p,n_p \in \N|1 \le p \le p_0}$ so that 
\begin{itemize}
\item
$u_{[i-m_p,i+n_p)}=u_{[j-m_p,j+n_p)}$;
\item
$u_{[i-m_p,i+n_p)}$ has a natural $p$-cutting:
\[
\Set{\sigma^p(u_{i_p}),\sigma^p(u_{i_p+1}),\dots,\sigma^p(u_{i_p+k-1})}.
\]
\end{itemize}
Let 
\[
\Set{\gamma_p,\sigma^p(u_{j_p}),\sigma^p(u_{j_p+1}),\dots,\sigma^p(u_{j_p+h_p-1}),\gamma_p^\prime}
\]
be a natural $p$-cutting of $u_{[j-m_p,j+n_p)}$. Since for every integer $p$ with $1 \le p \le p_0$, 
we have 
\[
C^2(K+1)+C^{-2} = kC^{-2}-2 \le h_p \le kC^2
\]
in view of an equality: 
\[
\left|\sigma^p(u_{[i_p,i_p+k)})\right|=\left|\gamma_p \sigma^p(u_{[j_p,j_p+h_p)})\gamma_p^\prime\right|,
\]
it follows from Lemma~\ref{comlexity} that the cardinality of a set: 
\[
\Set{(u_{[i_p,i_p+k)},u_{[j_p-1,j_p+h_p]})|1 \le p \le p_0}
\]
is at most $p_0-1$. The pigeonhole principle implies that for some integers 
$p$ and $q$ with $1 \le p < q \le p_0$, 
\[
u_{[i_p,i_p+k)} = u_{[i_q,i_q+k)} \textrm{ and } u_{[j_p-1,j_p+h_p]} = u_{[j_q-1,j_q+h_q]}.
\]
Hence, $h_p = h_q$. In view of an equality: 
\[
\gamma_q\sigma^q(u_{[j_q,j_q+h_q)}) \gamma_q =\sigma^{q-p}(\gamma_p)\sigma^q(u_{[j_q,j_q+h_q)})
\sigma^{q-p}(\gamma_p^\prime),
\]
and the $(K+1)$-power freeness of the sequence $u$; recall \eqref{estimate_max_power}, we obtain that 
$\gamma_q=\sigma^{q-p}(\gamma_p)$ and $\gamma_q^\prime=\sigma^{q-p}(\gamma_p^\prime)$. Taking account 
into the positions of the natural $q$-cutting of $u_{[j-m_q,j+n_q)}$, we see that 
$u_{[i-m_q,i+n_q)}$ and $u_{[j-m_q,j+n_q)}$ have the same natural $(q-p)$-cutting, which is yielded 
by application of $\sigma^{q-p}$ to identical words:
\[
\sigma^p(u_{[i_q,i_q+k)})=\gamma_p \sigma^p(u_{[j_q,j_q+h_q)}) \gamma_p^\prime, 
\]
so that $u_{[i-m_q,i+n_q)}$ and $u_{[j-m_q,j+n_q)}$ have the same natural $1$-cutting. 
\end{proof}

To verify \cite[Th\'eor\`eme~2]{MR1414542}, B.~Moss\'e discusses such a constant $p \in \N$ that if 
$\sigma^{p-1}(a) \ne \sigma(b)^{p-1}$ and $a, b \in A$ then $\sigma^k(a) \ne \sigma^k(b)$ for all 
$k \in \Z_+$. The constant $p$ is formally obtained by setting 
\[
p=
\begin{cases}
\max_{(a,b) \in B} \min \Set{ k \in \N | \sigma^k(a) = \sigma^k(b)}+1 & \textrm{ if } 
B \ne \emptyset; \\
1 & \textrm{ otherwise},
\end{cases}
\]
where 
\[
B=\Set{(a,b) \in A \times A| a \ne b, \sigma^k(a)= \sigma^k(b) \textrm{ for some } k \in \N}.
\]
See also the proof of \cite[Theorem~4.36]{MR2041676}. As an application of Lemmas~~\ref{comlexity} and 
\ref{bil_recog_index}, we can see that 
\begin{proposition}\label{constpiscomputable}
the constant $p$ is computable. 
\end{proposition}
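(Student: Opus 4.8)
The plan is to show that the constant $p$ can be computed by bounding, uniformly in the pair $(a,b) \in B$, the quantity $\min\Set{k \in \N \mid \sigma^k(a) = \sigma^k(b)}$, and then exhibiting a terminating procedure that evaluates this quantity. The key observation is that if $\sigma^k(a) = \sigma^k(b)$ for some $k$, then in particular $\sigma^k(a)$ and $\sigma^k(b)$ are words of the same length, and more is true: once $\sigma^k(a)=\sigma^k(b)$, applying $\sigma$ repeatedly keeps them equal, so the set of such $k$ is an up-set in $\N$ and it suffices to locate its least element. The difficulty is that $a\ne b$ but $\sigma^k(a)$ and $\sigma^k(b)$ can agree only from some stage on, and a priori $k$ could be large; what we need is an \emph{a priori} bound on the first stage of agreement.

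First I would reduce to a length-comparison and a bilateral-recognizability argument. Suppose $(a,b)\in B$, so $\sigma^{k}(a)=\sigma^k(b)$ for some $k\in\N$; fix the least such $k$, call it $k_{a,b}$. Since $u$ is uniformly recurrent and both letters $a$ and $b$ occur in $u$, one can locate occurrences of $a$ at some position $i$ and of $b$ at some position $j$ in $u$, and then, because $\sigma^{k_{a,b}}$ applied to a long enough window around each occurrence produces long common factors, I would arrange a situation in which a word of length at least $L_0$ (the local-unique-composition index of Lemma~\ref{bil_recog_index}) occurs at two positions whose natural $1$-cuttings are forced to agree letter by letter. Concretely: if $k_{a,b} \ge p_0$, then applying $\sigma^{k_{a,b}-p_0}$ to the equality $\sigma^{p_0}(a)\cdots = \sigma^{p_0}(b)\cdots$ inside $u$ and invoking Lemma~\ref{bil_recog_index} together with the $(K+1)$-power-freeness argument used in Step~2 of the proof of Theorem~\ref{necsuffcondunilrecog} (equation~\eqref{estimate_max_power}) forces the ancestor letters at the two positions to coincide — but those ancestor letters are $a$ and $b$, and a careful bookkeeping of the cutting positions will show $a=b$, contradicting $(a,b)\in B$. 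Hence $k_{a,b} < p_0$ for every pair in $B$, where $p_0$ is the \emph{computable} constant of~\eqref{p0}.

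With that bound in hand the algorithm is immediate: for each of the finitely many pairs $(a,b)\in A\times A$ with $a\ne b$, compute the words $\sigma^k(a)$ and $\sigma^k(b)$ for $k=1,2,\dots,p_0$; declare $(a,b)\in B$ iff equality occurs for some such $k$, and in that case record the least one as $\min\Set{k\in\N\mid\sigma^k(a)=\sigma^k(b)}$. Then form $B$ and read off $p$ by its defining formula. Since $p_0$, $\#A$, and the words $\sigma^k(a)$ are all explicitly computable from $\sigma$ (the bound $p_0$ is built from $C$, $K$, and $\#A$, all computable by the earlier sections), the whole procedure terminates, so $p$ is computable. The main obstacle I anticipate is the bookkeeping in the step that converts $k_{a,b}\ge p_0$ into a contradiction: one must check that the two occurrences of the long common factor in $u$ really do sit inside images $\sigma^{p_0}(\text{window around }a)$ and $\sigma^{p_0}(\text{window around }b)$ with the \emph{same} offsets, so that Lemma~\ref{bil_recog_index} can be applied and the conclusion about ancestors pins down $a$ versus $b$ rather than merely some pair of letters; this is exactly the ``same ancestor word'' phenomenon flagged at the end of Step~3 above, and it will require the same type of care, but no new idea.
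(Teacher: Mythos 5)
Your overall strategy coincides with the paper's: produce a computable threshold $P$ such that for any $a\ne b$ the least $k$ with $\sigma^k(a)=\sigma^k(b)$, if it exists, lies below $P$, and then decide membership in $B$ and evaluate $p$ by brute force over $k\le P$. The brute-force half is unobjectionable. The gap is in your justification of the threshold, i.e., in the descent step ``$\sigma^k(a)=\sigma^k(b)$ with $k$ large implies $\sigma^{k-1}(a)=\sigma^{k-1}(b)$.'' Lemma~\ref{bil_recog_index} controls only the \emph{interior} of the common word: at a position $t$ inside an occurrence of $\sigma^k(a)$ at $i$ and of $\sigma^k(b)$ at $j$ in $u$, the hypothesis $u_{[i+t-L_0,i+t+L_0)}=u_{[j+t-L_0,j+t+L_0)}$ holds only for $L_0\le t\le |\sigma^k(a)|-L_0$, since otherwise the two windows see the (different) surrounding contexts. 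So the local unique composition property yields an identity of the form \eqref{content}, namely $\sigma^{k-1}(a)_{[q_1,r_1]}=\sigma^{k-1}(b)_{[q_2,r_2]}$ on interior ranges, but it says nothing about the boundary pieces $\sigma^{k-1}(a)_{[1,q_1)}$, $\sigma^{k-1}(a)_{[r_1,\,|\sigma^{k-1}(a)|]}$, and it does not even give $q_1=q_2$. Power-freeness alone does not close this: the paper's argument for \eqref{q1=q2} needs $\Pref_N(\sigma^{k-1}(a))=\Pref_N(\sigma^{k-1}(b))$ as an input, i.e., one must first descend the prefix (and suffix) data from level $k$ to level $k-1$.

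That descent is the actual content of the paper's proof and is absent from yours: the sequences $k\mapsto\Pref_N(\sigma^k(a))$ and $k\mapsto\Suf_N(\sigma^k(a))$ are eventually periodic (because, for $N$ large enough, $\Pref_N(\sigma^{k+1}(a))$ is a function of $\Pref_N(\sigma^{k}(a))$), with pre-period and period computably bounded via Lemma~\ref{comlexity} and pigeonhole; on the periodic part the update map is a bijection of a finite cycle, which is exactly what permits pulling $\Pref_N(\sigma^{k}(a_1))=\Pref_N(\sigma^{k}(a_2))$ back to level $k-1$ (\eqref{NPref}, \eqref{NSuf}). This is a genuinely new ingredient, not the ``same type of care, no new idea'' you anticipate; without it the contradiction you want from $k_{a,b}\ge p_0$ does not materialize. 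Relatedly, your specific choice of threshold --- the constant $p_0$ of \eqref{p0} --- is asserted rather than argued, and it is not the threshold the paper establishes: the paper's $p_{a_1,a_2}$ is built from the constant $N$ of \eqref{N}, the exponent $k_0$, and the periodicity indices $i_1,i_2,k_1,k_2$. To repair your proof you would need to import the prefix/suffix periodicity machinery essentially wholesale, at which point you have reproduced the paper's argument.
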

\begin{proof}
Put 
\begin{equation}\label{N}
N=K \left(\lfloor L_0 \lambda^{-1}(C-C^{-1}) \rfloor + 1\right) + \lfloor CL_0 \lambda^{-1} \rfloor + 1.
\end{equation}
Choose an integer $k_0$ with $k_0 > \log_\lambda (CN)$. Then, for all letters $a \in A$, 
\[
|\sigma^{k_0}(a)| \ge C^{-1}\lambda^{k_0} > N.
\]
Following \cite{Ro}, given letter $a \in A$ and integer $k$ with $k \ge k_0$, let $\Suf_N(\sigma^k(a))$ 
(resp.\ $\Pref_N(\sigma^k(a))$) denote a suffix (resp.\ prefix) of $\sigma^k(a)$ whose length is $N$. 
Fix distinct letters $a_1,a_2 \in A$. Lemma~\ref{comlexity} together with the pigeonhole principle 
allows us to find those integers $i_m < j_m$ and $k_m < \ell_m$ which belong to a closed interval 
$\left[k_0, k_0 + \lambda C^2 (\# A)^2 N \right]$ for which and for all $m=1,2$, 
\begin{itemize}
\item
$\Pref_N(\sigma^{i_m}(a_m))=\Pref_N(\sigma^{j_m}(a_m))$;
\item
$\# \Set{\Pref_N(\sigma^k(a_m))|k_0 \le k \le j_m}=j_m - k_0$;
\item
$\Suf_N(\sigma^{k_m}(a_m))=\Suf_N(\sigma^{\ell_m}(a_m))$;
\item
$\# \Set{\Suf_N(\sigma^k(a_m))|k_0 \le k \le \ell_m} = \ell_m - k_0$.
\end{itemize}
For $m=1,2$, regard 
\[
\mathcal{P}_m=\Set{\Pref_N(\sigma^k(a_m))|k \ge i_m} \textrm{ and } 
\mathcal{S}_m = \Set{\Suf_N(\sigma^k(a_m))|k \ge k_m},
\]
as sequences of words, which have periods $j_m-i_m$ and $\ell_m - k_m$, respectively. Observe that 
$\mathcal{P}_1 \cap \mathcal{P}_2 = \emptyset$ unless $\mathcal{P}_1 = \mathcal{P}_2$. This fact is 
also valid for $\mathcal{S}_m$. 

If $\mathcal{P}_1 \cap \mathcal{P}_2 = \emptyset$ or $\mathcal{S}_1 \cap \mathcal{S}_2 = \emptyset$, 
then $\sigma^k(a) \ne \sigma^k(b)$ for all $k \in \N$. Then, set $p_{a_1,a_2}=1$. 
If $\mathcal{P}_1=\mathcal{P}_2$ and $\mathcal{S}_1=\mathcal{S}_2$, then set 
\[
p_{a_1,a_2}=\max \Set{i_1,i_2,k_1,k_2,\log_\lambda(CN+C^2L_0\lambda^{-1})} + 1.
\]
Let $p$ denote $p_{a_1,a_2}$ for the simplicity of notation. 
Let us verify that if $\sigma^{p-1}(a_1) \ne \sigma^{p-1}(a_2)$ then 
$\sigma(a_1)^k \ne \sigma^k(a_2)$ for all $k \in \Z_+$. To this end, it is enough for us to consider 
only the case where $\mathcal{P}_1=\mathcal{P}_2$ and $\mathcal{S}_1=\mathcal{S}_2$. Let us see that if 
$\sigma^p(a_1) = \sigma^p(a_2)$ then $\sigma^{p-1}(a_1) = \sigma^{p-1}(a_2)$. For each $m = 1,2$, 
let $q_m < r_m$ be unique integers satisfying that 
\begin{align*}
\left\vert \sigma \left(\sigma^{p-1}(a_m)_{[1,q_m)}\right) \right\vert & \le L_0; \\
\left\vert \sigma \left(\sigma^{p-1}(a_m)_{[1,q_m]}\right) \right\vert & > L_0; \\
\left\vert \sigma \left(\sigma^{p-1}(a_m)_{[1,r_m)}\right) \right\vert & < |\sigma^p(a_m)| - L_0; \\
\left\vert \sigma \left(\sigma^{p-1}(a_m)_{[1,r_m]}\right) \right\vert & \ge |\sigma^p(a_m)| - L_0. 
\end{align*}
These inequalities together with \eqref{conseqPF} imply that for all $m=1,2$, 
\begin{align}
C^{-1}L_0 \lambda^{-1} & < q_m \le CL_0\lambda^{-1}; \label{qm} \\
C^{-1}L_0\lambda^{-1} & < |\sigma^{p-1}(a_m)| - r_m + 1 \le CL_0\lambda^{-1} + 1 \label{rm}
\end{align}
and hence 
\begin{align*}
|q_1-q_2| & < L_0\lambda^{-1}(C-C^{-1}); \\
\left||\sigma^{p-1}(a_1)| -r_1 + 1 -(|\sigma^{p-1}(a_2)| -r_2 + 1) \right| & < 
L_0\lambda^{-1}(C-C^{-1}) + 1; \\
|\sigma^{p-1}(a_m)| -q_m + 1 & \ge N +1; \\
r_m & \ge N.
\end{align*}

Lemma~\ref{bil_recog_index} allows us to know that 
\begin{equation}\label{content}
\sigma^{p-1}(a_1)_{[q_1,r_1]}=\sigma^{p-1}(a_2)_{[q_2,r_2]}.
\end{equation}
Since $\Pref_N(\sigma^p(a_1))=\Pref_N(\sigma^p(a_2))$ and 
$\Suf_N(\sigma^p(a_1))=\Suf_N(\sigma^p(a_2))$, it follows from the choice of $p$ that 
\begin{align}
\Pref_N(\sigma^{p-1}(a_1)) &= \Pref_N(\sigma^{p-1}(a_2)); \label{NPref} \\
\Suf_N(\sigma^{p-1}(a_1)) &= \Suf_N(\sigma^{p-1}(a_2)). \label{NSuf}
\end{align}
If $q_1 \ne q_2$, then the $K$-power of a word length $|q_1-q_2|$ must occur at $\min \Set{q_1, q_2}$ 
in $\sigma^{p-1}(a_m)$ if $q_m$ attains the minimum value, which is impossible in virtue of 
Lemma~\ref{uppbndforpow}. Hence, we obtain that 
\begin{equation}\label{q1=q2}
q_1=q_2,
\end{equation}
which is less than $N$ in view of 
\eqref{N} and \eqref{qm}. Similarly, we also obtain that 
\begin{align}
\left|\sigma^{p-1}(a_1)_{[r_1,|\sigma^{p-1}(a_1)|]}\right| &= 
|\sigma^{p-1}(a_1)| -r_1 + 1 \label{rm_to_right_edge} \\ 
&= |\sigma^{p-1}(a_2)| -r_2 + 1 = 
\left|\sigma^{p-1}(a_2)_{[r_2,|\sigma^{p-1}(a_2)|]}\right|, \notag 
\end{align}
which is less than $N$ in virtue of \eqref{N} and \eqref{rm}. 
Now, putting together \eqref{content}-\eqref{rm_to_right_edge}, we see that 
the words $\sigma^{p-1}(a_1)$ and $\sigma^{p-1}(a_2)$ must coincide with each other. 
\end{proof}

In order to see that a constant $L_1$ appearing in Lemma~\ref{unif_exp} is computable, we will need 
some facts about Birkhoff contraction coefficients for {\em allowable} nonnegative square matrices. We 
consult \cite[Chapter~3]{Sen} and \cite[Subsections~2.1.1 and 2.2.1]{hartfiel} for them. 
Let us consider a {\em projective metric} $d$ which is defined by for {\em positive}, row vectors 
$x,y \in \R^A$, i.e.\ all entries are positive,
\[
d(x,y)=\ln \frac{\max\limits_{a \in A} \dfrac{x_a}{y_a}}{\min\limits_{b \in A} \dfrac{x_b}{y_b}}.
\]
Suppose that a primitive matrix $M=(m_{a,b})_{a,b \in A}$ is allowable, i.e.\ every row and 
column has a positive entry. {\em Birkhoff contraction coefficient} $\tau_\mathrm{B}(M)$ is 
defined by
\[
\tau_\mathrm{B}(M)= \sup \Set{\frac{d(xM,yM)}{d(x,y)}|x,y \in \R^A \textrm{ are 
positive and linearly independent.} }.
\]
There are known properties \cite[Lemma~2.1 and Theorem~2.8]{hartfiel} that 
$0 \le \tau_\mathrm{B}(M) \le 1$ and $\tau_\mathrm{B}(MN) \le \tau_\mathrm{B}(M)\tau_\mathrm{B}(N)$ if 
$N$ is another allowable, nonnegative $A \times A$ matrix. The coefficient $\tau_\mathrm{B}(M)$ is 
computable: 
\begin{equation}\label{explicit_form}
\tau_\mathrm{B}(M)=\frac{1-\sqrt{\phi(M)}}{1+\sqrt{\phi(M)}},
\end{equation}
where 
\[
\phi(M)=\min \Set{\frac{m_{i,j}m_{k,\ell}}{m_{i,\ell}m_{k,j}}, 
\frac{m_{i,\ell}m_{k,j}}{m_{i,j}m_{k,\ell}}|\begin{pmatrix}
m_{i,j} & m_{i,\ell} \\
m_{k,j} & m_{k,\ell}
\end{pmatrix} \textrm{ is a submatrix of } M.}
\]
if $M$ is positive, and otherwise, $\phi(M) = 0$. Put 
\begin{equation}\label{1st_n0}
n_0=(\# A)^2-2(\# A)+2. 
\end{equation}
Since it follows from \cite[Theorem~2.9]{Sen} that ${M_\sigma}^n$ is positive for all integers $n$ 
with $n \ge n_0$, we have that $\phi({M_\sigma}^n) > 0$ for all such integers $n$. It follows 
from \eqref{explicit_form} that for all such integers $n$, 
\begin{equation}\label{coeff_less_one}
\tau_\mathrm{B}({M_\sigma}^n) < 1.
\end{equation}

As in Section~\ref{sec_freeness}, let $\alpha$ be a positive, left eigenvector of $M_\sigma$ 
corresponding to its Perron eigenvalue $\lambda$. Given a word $w \in A^\ast$, set 
\[
\comp(w)=(|w|_b)_{b \in A},
\]
which is viewed as a row vector in $\R^A$. Define a row vector $e_a$ in $\R^A$ by for each letter 
$b \in A$, 
\[
(e_a)_b = \begin{cases}
1 & \textrm{ if } a = b; \\
0 & \textrm{ otherwise}.
\end{cases}
\]
It is clear that $\comp \left(\sigma^n(a)\right) = e_a {M_\sigma}^n$. 
It follows from \cite[Theorem~2.9]{Sen} and \cite[Theorem~2.3 and Corollary~2.2]{hartfiel} that 
for all letter $a \in A$ and integer $n$ with $n \ge 2n_0$, 
\begin{align}
\left\Vert \frac{\comp(\sigma^n(a))}{|\sigma^n(a)|} - \frac{\alpha}{\Vert \alpha \Vert_1} \right\Vert_1 
&=
\left\Vert \frac{e_a {M_\sigma}^n}{\Vert e_a {M_\sigma}^n \Vert_1} - \frac{\alpha}{\Vert \alpha \Vert_1}
 \right\Vert_1 \notag \\ 
& \le \exp\left(d(e_a {M_\sigma}^n, \alpha)\right)-1 \notag \\
& \le \exp\left(\tau_\mathrm{B}\left({M_\sigma}^{n-n_0}\right)d(e_a{M_\sigma}^{n_0},\alpha)\right)-1 \notag \\
& \le \exp\left({\tau_\mathrm{B}({M_\sigma}^{n_0})}^{\left[\frac{n-n_0}{n_0}\right]}
\max_{a \in A}d(e_a{M_\sigma}^{n_0},\alpha) \right)-1 \notag \\
& \le \exp\left(\max_{a \in A}d(e_a{M_\sigma}^{n_0},\alpha) \right)^{%
{\tau_\mathrm{B}({M_\sigma}^{n_0})}^{\left[\frac{n}{n_0}-1\right]}}-1, \label{UB_2nd_term}
\end{align}
which monotonically decreases to zero in virtue of \eqref{coeff_less_one} as $n$ increases. As a 
trivial consequence, we obtain that 
\begin{equation}\label{indiv_UB_2nd_term}
\max_{a,b,c \in A}
\left \vert \frac{|\sigma^n(a)|_c}{|\sigma^n(a)|} - \frac{|\sigma^n(b)|_c}{|\sigma^n(b)|} \right \vert 
\le 2 \exp\left(\max_{a \in A}d(e_a{M_\sigma}^{n_0},\alpha) \right)^{%
{\tau_\mathrm{B}({M_\sigma}^{n_0})}^{\left[\frac{n}{n_0}-1\right]}}-2. 
\end{equation}

\begin{lemma}\label{unif_exp}
For every real number $\rho$ with $1 < \rho < \lambda$, there exists a computable number 
$L_1 \in \N$ so that for all integers $L \ge L_1$ and $i \ge 0$, we have an inequality$:$
\[
\vert \sigma(u_{[i,i+L)}) \vert \ge \rho \vert u_{[i,i+L)} \vert = \rho L.
\]
In particular, it holds that for all $i \in \Z_+$,
\[
|\sigma(u_{[i,i+L_1)})| \ge L_1+1.
\] 
\end{lemma}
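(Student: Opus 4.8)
The plan is to estimate $|\sigma(u_{[i,i+L)})|$ via the letter-count vector $\comp(u_{[i,i+L)})$, after controlling the latter uniformly in $i$ by means of a natural $n$-cutting for a suitably large, \emph{computable} exponent $n$. View $v=(|\sigma(b)|)_{b\in A}$ as a column vector in $\R^A$, so that $v=M_\sigma\mathbf 1$ with $\mathbf 1$ the all-ones column vector. Two identities drive the argument: $|\sigma(w)|=\comp(w)\cdot v$ for every $w\in A^\ast$ (since $|\sigma(w)|=\sum_{b\in A}|w|_b|\sigma(b)|$), and $\dfrac{\alpha}{\Vert\alpha\Vert_1}\cdot v=\dfrac{1}{\Vert\alpha\Vert_1}\,\alpha M_\sigma\mathbf 1=\lambda$, using $\alpha M_\sigma=\lambda\alpha$ and $\alpha\mathbf 1=\Vert\alpha\Vert_1$ for the positive left eigenvector $\alpha$.

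Fix $\rho$ with $1<\rho<\lambda$ and put $\epsilon=(\lambda-\rho)/(2S_1)>0$. The right-hand side of \eqref{UB_2nd_term} is a computable quantity which, by \eqref{coeff_less_one}, decreases to $0$; hence one can effectively find an integer $n\ge 2n_0$, with $n_0$ as in \eqref{1st_n0}, such that $\max_{a\in A}\bigl\Vert\comp(\sigma^n(a))/|\sigma^n(a)|-\alpha/\Vert\alpha\Vert_1\bigr\Vert_1<\epsilon$. Now let $w=u_{[i,i+L)}$ with $L>2S_n$. Since $u=\sigma^n(u)$, the word $w$ admits a natural $n$-cutting (Definition~\ref{naturalcuttings})
\[
w=p\,\sigma^n(u_k)\sigma^n(u_{k+1})\cdots\sigma^n(u_{k+m-1})\,s,\qquad m\ge 1,\ |p|\le S_n,\ |s|\le S_n,
\]
with $p$ a suffix of $\sigma^n(u_{k-1})$ and $s$ a prefix of $\sigma^n(u_{k+m})$; here $m\ge 1$ is forced by $L>2S_n$. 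Decomposing $\comp(w)-L\,\alpha/\Vert\alpha\Vert_1$ into the boundary part $\comp(p)+\comp(s)$, the part $\sum_{j=0}^{m-1}\bigl(\comp(\sigma^n(u_{k+j}))-|\sigma^n(u_{k+j})|\,\alpha/\Vert\alpha\Vert_1\bigr)$, and the scalar remainder $\bigl(\sum_{j=0}^{m-1}|\sigma^n(u_{k+j})|-L\bigr)\alpha/\Vert\alpha\Vert_1$, and using the bound on $n$ together with $\sum_{j=0}^{m-1}|\sigma^n(u_{k+j})|=L-|p|-|s|$, I obtain
\[
\left\Vert\comp(w)-L\,\frac{\alpha}{\Vert\alpha\Vert_1}\right\Vert_1\le \epsilon L+2(|p|+|s|)\le\epsilon L+4S_n .
\]

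Combining this with the two identities from the first paragraph and $\Vert v\Vert_\infty=S_1$,
\begin{align*}
|\sigma(w)|=\comp(w)\cdot v
&\ge L\,\frac{\alpha}{\Vert\alpha\Vert_1}\cdot v-\left\Vert\comp(w)-L\,\frac{\alpha}{\Vert\alpha\Vert_1}\right\Vert_1\cdot S_1 \\
&\ge \lambda L-(\epsilon L+4S_n)S_1=\frac{\lambda+\rho}{2}\,L-4S_1S_n ,
\end{align*}
which is $\ge\rho L$ as soon as $L\ge 8S_1S_n/(\lambda-\rho)$. Hence it suffices to take $L_1=\max\bigl\{\,2S_n+1,\ \lceil 8S_1S_n/(\lambda-\rho)\rceil\,\bigr\}$; as $n$, $S_n$, $S_1$, $\lambda$ are all computable, so is $L_1$. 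For the final assertion, $|\sigma(u_{[i,i+L_1)})|\ge\rho L_1>L_1$ since $\rho>1$, and an integer strictly larger than the integer $L_1$ is $\ge L_1+1$.

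The step I expect to be the main obstacle is the passage, in the second paragraph, from the uniform-in-$a$ convergence of the single-letter image frequencies $\comp(\sigma^n(a))/|\sigma^n(a)|$ to a uniform-in-$i$ estimate for the frequency vector of an arbitrary length-$L$ factor of $u$. The natural $n$-cutting supplies this: its only defect is the two incomplete blocks at the ends of $w$, whose combined length is at most $2S_n$, independent of both $i$ and $L$; this defect is precisely what forces the threshold $L_1$ to grow with $S_n$, hence with the approximation quality $\epsilon$, hence with $\rho$.
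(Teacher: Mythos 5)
Your proof is correct and follows essentially the same route as the paper's: a natural $n$-cutting with a computable exponent $n$ chosen via the Birkhoff-contraction estimate \eqref{UB_2nd_term}, so that the letter-frequency vector of $u_{[i,i+L)}$ is $\ell^1$-close to the Perron direction up to a boundary error of size $O(S_n)$, followed by pairing with the letter-length data of $\sigma$. Your bookkeeping --- the three-term decomposition of $\comp(w)-L\,\alpha/\Vert\alpha\Vert_1$ and the pairing $\comp(w)\cdot v$ with $v=M_\sigma\mathbf 1$ in place of the paper's $\bigl\Vert (\comp(w)/L)M_\sigma\bigr\Vert_1$ --- is somewhat more streamlined and yields a more explicit $L_1$, but the substance is identical.
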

\begin{proof}
Let $\epsilon$ denote a number satisfying that $\rho=(1-\epsilon)\lambda$. This forces that 
$0 < \epsilon < 1$. Let $n_0$ be as in \eqref{1st_n0}. Choose an integer $n$ with $n \ge n_0$ so large 
that 
\begin{equation}\label{smalle_value}
2 \exp\left(\max_{a \in A}d(e_a{M_\sigma}^{n_0},\alpha) \right)^{%
{\tau_\mathrm{B}({M_\sigma}^{n_0})}^{\left[\frac{n}{n_0}-1\right]}}-2 < \frac{\epsilon \lambda}{4 (\# A)\Vert M_\sigma \Vert_1},
\end{equation}
where $\Vert M_\sigma \Vert_1$ is a norm of $M_\sigma$ defined by 
\[
\Vert M_\sigma \Vert_1 = \max \Set{\Vert x M_\sigma \Vert_1| x \in \R^A, \Vert x \Vert_1 = 1}=
\max_{b \in A} \sum_{a \in A}(M_\sigma)_{a,b}.
\]
Consequently, the right hand side of \eqref{UB_2nd_term} is also less than \eqref{smalle_value}. 
If the integer $L$ is not less than $2S_n$, then we obtain a natural $n$-cutting: 
\[
\Set{ v, \sigma^n(u_{j+1}), \sigma^n(u_{j+2}), \dots , \sigma^n(u_{j+k}), w}
\]
of $u_{[i,i+L)}$ so that $v \prec_{\ssuf} \sigma^n(u_j)$ and $w \prec_{\spref} \sigma^n(u_{j+k+1})$. 
Put 
\[
\delta_k=1-\frac{1}{2C^2k^{-1}+1},
\]
which monotonically decreases as $k$ increases. Since $k \ge {S_n}^{-1}L -2$, we have that 
\begin{equation}\label{est_delta_by_L}
\delta_k \le \frac{2C^2}{{S_n}^{-1}L+2(C^2-1)}.
\end{equation}
Choose $L_1 \in \N$ so that for all integers $L$ with $L \ge L_1$,
\begin{equation}\label{choice_of_L1}
\max \Set{\delta_k, 2S_n L^{-1}} < \frac{\epsilon \lambda}{4 (\# A)\Vert M_\sigma \Vert_1}.
\end{equation}

Suppose that the integer $L$ is not less than $L_1$. Let $i \in \Z_+$ and $c \in A$ be arbitrary. Since
\[
\frac{|u_{[i,i+L)}|_c}{L} = \frac{\sum_{\ell = j+1}^{j+k} |\sigma^n(u_\ell)|_c}%
{\sum_{\ell = j+1}^{j+k} |\sigma^n(u_\ell)|} \cdot 
\frac{\sum_{\ell = j+1}^{j+k} |\sigma^n(u_\ell)|}{L} + \frac{|v|_c + |w|_c}{L},
\]
we obtain that 
\begin{align}
\min_{a \in A} \frac{|\sigma^n(a)|_c}{|\sigma^n(a)|} \cdot 
\frac{\sum_{\ell = j+1}^{j+k} |\sigma^n(u_\ell)|}{L}
& \le \frac{|u_{[i,i+L)}|_c}{L} \label{minmaxineq} \\ 
& \le \max_{a \in A} \frac{|\sigma^n(a)|_c}{|\sigma^n(a)|} \cdot 
\frac{\sum_{\ell = j+1}^{j+k} |\sigma^n(u_\ell)|}{L}+2S_nL^{-1}. \notag
\end{align}
However, 
\begin{equation}\label{properlength}
1-\delta_k=\frac{1}{2C^2k^{-1}+1} \le \frac{\sum_{\ell = j+1}^{j+k} |\sigma^n(u_\ell)|}{L} \le 1. 
\end{equation}
Let $b \in A$ be arbitrary. Putting \eqref{minmaxineq} and \eqref{properlength} together, we obtain that 
\[
(1-\delta_k)\min_{a \in A} \frac{|\sigma^n(a)|_c}{|\sigma^n(a)|} - \frac{|\sigma^n(b)|_c}{|\sigma^n(b)|} 
\le \frac{|u_{[i,i+L)}|_c}{L}  - \frac{|\sigma^n(b)|_c}{|\sigma^n(b)|} \le 
\max_{a \in A} \frac{|\sigma^n(a)|_c}{|\sigma^n(a)|} - \frac{|\sigma^n(b)|_c}{|\sigma^n(b)|} + 2S_nL^{-1},
\]
and hence,
\[
\left\vert \frac{|u_{[i,i+L)}|_c}{L} - \frac{|\sigma^n(b)|_c}{|\sigma^n(b)|} \right\vert \le 
\max_{a \in A} \left\vert \frac{|\sigma^n(a)|_c}{|\sigma^n(a)|} - 
\frac{|\sigma^n(b)|_c}{|\sigma^n(b)|} \right\vert + \max \Set{\delta_k, 2S_nL^{-1}}. 
\]
Using \eqref{indiv_UB_2nd_term}, \eqref{smalle_value} and \eqref{choice_of_L1}, we hence obtain that 
\begin{align*}
\left\Vert \frac{\comp(u_{[i,i+L)})}{L} - \frac{\comp(\sigma^n(b))}{|\sigma^n(b)|}\right\Vert_1 
& \le \sum_{c \in A} 
\max_{a \in A} \left\vert \frac{|\sigma^n(a)|_c}{|\sigma^n(a)|} - 
\frac{|\sigma^n(b)|_c}{|\sigma^n(b)|} \right\vert + (\# A) \max \Set{\delta_k, 2S_nL^{-1}} \\
& \le \frac{\epsilon \lambda}{2 \Vert M_\sigma \Vert_1}.
\end{align*}
We finally obtain that 
\begin{align*}
\left\Vert \frac{\comp(u_{[i,i+L)})}{L} - \frac{\alpha}{\Vert \alpha \Vert_1}\right\Vert_1 & \le 
\left\Vert \frac{\comp(u_{[i,i+L)})}{L} - \frac{\comp(\sigma^n(b))}{|\sigma^n(b)|}\right\Vert_1 + 
\left\Vert \frac{\comp(\sigma^n(b))}{|\sigma^n(b)|} - \frac{\alpha}{\Vert \alpha \Vert_1} 
\right\Vert_1 \\ 
& \le \frac{\epsilon \lambda}{\Vert M_\sigma \Vert_1}
\end{align*}
and hence,
\begin{align*}
\vert \sigma(u_{[i,i+L)}) \vert &= L \left\Vert \frac{\comp (u_{[i,i+L)})}{L}M_\sigma \right\Vert_1 \\
& \ge 
L\left\vert \left\Vert \frac{\alpha}{\Vert \alpha \Vert_1}M_\sigma \right\Vert_1 - 
\left\Vert \left(\frac{\comp (u_{[i,i+L)})}{L}-\frac{\alpha}{\Vert \alpha \Vert_1}\right)M_\sigma 
\right\Vert_1\right\vert \\
& \ge L(1-\epsilon)\lambda = \rho L.
\end{align*}

It is now clear that $L_1$ is computable, because in virtue of \eqref{est_delta_by_L} and 
\eqref{choice_of_L1} it is sufficient to choose $L_1$ so that 
\[
\max \Set{\frac{2C^2}{{S_n}^{-1}L_1+2(C^2-1)}, 2S_n {L_1}^{-1}} < \frac{\epsilon \lambda}
{4 (\# A)\Vert M_\sigma \Vert_1}.
\]
This completes the proof. 
\end{proof}
Fix $1 < \rho < \lambda$ and $L_1 \in \N$ as in Lemma~\ref{unif_exp}. 
Fix an integer $N$ greater than 
\[
\max\Set{S_1 (L_1+1) -1, \frac{1+L_0({I_1}^{-1}+1)}{\rho-1}}.
\]
Set 
\[
V^\ast=\Set{(xac,ybc) \in \L_{N+L_1+1}(u) \times \L_{N+L_1+1}(u) | a,b \in A \ (a \ne b), \ c 
\in \L_{L_1}(u)},
\]
which is nonempty, because the sequence $u$ over the finite alphabet $A$ is assumed to be 
aperiodic; see \cite[Theorem~2.11]{CH} and \cite[Proposition V.18]{MR924156}. Define an equivalence 
relation $\sim$ on $V^\ast$ so that $(v,w) \sim (v^\prime,w^\prime)$ if and only if 
$(v,w) = (v^\prime,w^\prime)$ or $(v,w) \sim (w^\prime,v^\prime)$. Set $V=V^\ast/\sim$. Let 
$[v,w]$ denote the equivalence class of a given element $(v,w)$ of $V^\ast$. 
Consider a directed,finite graph $G$ with vertex set $V$ and edge set $E$. 
The edge set $E$ is defined by declaring that there exists an edge from a vertex $v$ to a vertex 
$v^\prime$ if and only if there exist word $w \in A^+$, representatives $(s,t)$ and 
$(s^\prime,t^\prime)$ of the equivalence classes $v$ and $v^\prime$, respectively, such that 
$s^\prime w \prec_{\suf} \sigma(s)$ and $t^\prime w \prec_{\suf} \sigma(t)$. 
\begin{remark}\label{edgesarefunction}
The number of edges leaving a given vertex is at most one. 
\end{remark}

\begin{definition}
We say that a vertex of the directed, finite graph $G$ {\em generates a gap of natural $1$-cutting 
points} if there exist letters $\alpha, \beta$, words $\gamma, \delta$ of the same length and 
representative $(v,w)$ of the vertex so that 
\begin{itemize}
\item
$\sigma(\alpha) \prec_{\ssuf} \sigma(\beta)$;
\item
$\sigma(\gamma_i)=\sigma(\delta_i)$ for every integer $i$ with $1 \le i \le |\gamma|$;
\item
$\alpha \gamma \prec_{\suf} \sigma(v)$ and $\beta \delta \prec_{\suf} \sigma(w)$.
\end{itemize}
\end{definition}
We define a unilateral subshift:
\[
X_\sigma=\Set{x=(x_i)_{i \in \Z_+}|x_{[i,j]} \in \L(u) \textrm{ for all }i,j \in \Z_+}. 
\]
In other words, the unilateral subshift $X_\sigma$ is generated by the language of the sequence $u$. 
\begin{theorem}
The following are equivalent$:$
\begin{enumerate}
\item\label{non-recogforcycle}
the substitution $\sigma$ is not unilaterally recognizable$;$
\item\label{thecycle}
the directed, finite graph $G$ has a cycle including a vertex generating a gap of natural 
$1$-cutting points.
\end{enumerate}
\end{theorem}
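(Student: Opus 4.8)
\emph{Strategy.} I prove the two implications separately. The direction (\ref{thecycle})$\Rightarrow$(\ref{non-recogforcycle}) is an amplification: a cycle turns the bounded ``gap'' recorded by a vertex into gaps of unbounded size, which Theorem~\ref{necsuffcondunilrecog} then converts into non-recognizability. The direction (\ref{non-recogforcycle})$\Rightarrow$(\ref{thecycle}) is a pigeonhole argument: a non-recognizability witness of Theorem~\ref{necsuffcondunilrecog} with large parameter is unfolded through $\sigma$ into an infinite walk in the finite graph $G$, which must close up into a cycle.

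\emph{The implication (\ref{thecycle})$\Rightarrow$(\ref{non-recogforcycle}).} Assume $G$ has a cycle $v_0\to v_1\to\cdots\to v_n=v_0$ on which $v_0$ generates a gap. The edge conditions are unchanged when the representatives of both endpoints are flipped at once, and each vertex has a unique out\nobreakdash-edge (Remark~\ref{edgesarefunction}); so, after possibly doubling the cycle, I may choose representatives $(s_\ell,t_\ell)$ of $v_\ell$ and words $w_\ell\in A^+$ with $s_{\ell+1}w_\ell\prec_{\suf}\sigma(s_\ell)$ and $t_{\ell+1}w_\ell\prec_{\suf}\sigma(t_\ell)$ for $0\le\ell<n$, with $(s_n,t_n)=(s_0,t_0)$ and with $(s_0,t_0)$ the representative witnessing the gap. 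Telescoping over one turn, and using that the \emph{same} $w_\ell$ occurs in both conditions of each edge, shows that $\sigma^{n}(s_0)$ ends with $s_0W$ and $\sigma^{n}(t_0)$ ends with $t_0W$ for one common word $W:=w_{n-1}\sigma(w_{n-2})\cdots\sigma^{n-1}(w_0)\in A^+$; iterating, $\sigma^{kn}(s_0)$ and $\sigma^{kn}(t_0)$ end with $s_0W^{(k)}$ and $t_0W^{(k)}$ respectively, where $W^{(k)}:=W\sigma^{n}(W)\cdots\sigma^{(k-1)n}(W)$ has length growing without bound. Now let $\alpha,\beta\in A$ and $\gamma,\delta\in A^\ast$ with $|\gamma|=|\delta|$ witness that $v_0$ generates a gap, so $\sigma(\alpha)\prec_{\ssuf}\sigma(\beta)$, $\sigma(\gamma_r)=\sigma(\delta_r)$ for $1\le r\le|\gamma|$, $\alpha\gamma\prec_{\suf}\sigma(s_0)$ and $\beta\delta\prec_{\suf}\sigma(t_0)$. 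Applying $\sigma$ once more to the telescoped relations, $\sigma^{kn+1}(s_0)$ ends with $\alpha\gamma\,\sigma(W^{(k)})$ and $\sigma^{kn+1}(t_0)$ ends with $\beta\delta\,\sigma(W^{(k)})$; both are factors of $u$ since $s_0,t_0\in\L(u)$. Hence there are $j_k,i_k$ with $u_{j_k}=\alpha$, $u_{i_k}=\beta$, $u_{j_k+r}=(\gamma\sigma(W^{(k)}))_r$ and $u_{i_k+r}=(\delta\sigma(W^{(k)}))_r$ for $1\le r\le|\gamma|+|\sigma(W^{(k)})|$; since $\gamma\sigma(W^{(k)})$ and $\delta\sigma(W^{(k)})$ share the suffix $\sigma(W^{(k)})$ and $\sigma(\gamma_r)=\sigma(\delta_r)$ for $r\le|\gamma|$, this gives $\sigma(u_{j_k})=\sigma(\alpha)\prec_{\ssuf}\sigma(\beta)=\sigma(u_{i_k})$ and $\sigma(u_{i_k+r})=\sigma(u_{j_k+r})$ for all $1\le r\le|\gamma|+|\sigma(W^{(k)})|$. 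As $|\gamma|+|\sigma(W^{(k)})|\to\infty$, Theorem~\ref{necsuffcondunilrecog} shows $\sigma$ is not unilaterally recognizable.

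\emph{The implication (\ref{non-recogforcycle})$\Rightarrow$(\ref{thecycle}).} Suppose $\sigma$ is not unilaterally recognizable, and fix, via Theorem~\ref{necsuffcondunilrecog}, integers $i,j$ with $\sigma(u_j)\prec_{\ssuf}\sigma(u_i)$ and $\sigma(u_{i+k})=\sigma(u_{j+k})$ for $1\le k\le L$, with $L$ much larger than $\#V$ and than the constants entering $N$. Then $\sigma(u_{[j,j+L+1)})$ is a strict suffix of $\sigma(u_{[i,i+L+1)})$; overlaying the canonical occurrences of these two words in $u$ (whose left endpoints $|\sigma(u_{[0,j)})|$ and $|\sigma(u_{[0,i)})|$ lie in $E_1$) with right ends aligned displays two occurrences of one long factor of $u$ whose interior natural $1$\nobreakdash-cutting points coincide but exactly one of whose two left endpoints lies in $E_1$, and applying $\sigma^{p}$ reproduces this picture at every scale $p$ because $\prec_{\ssuf}$ is preserved by $\sigma$. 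Recording, for each $p$, the length\nobreakdash-$(N+L_1+1)$ window of $u$ placed around the discrepancy and its look\nobreakdash-ahead at scale $p$ yields a vertex $v^{(p)}$ of $G$, and passing from scale $p$ to scale $p+1$ — inserting the appropriate synchronising word $w$ — is an edge $v^{(p)}\to v^{(p+1)}$; moreover gap-generating vertices recur infinitely often among the $v^{(p)}$, a feature inherited from the persisting strict-suffix relation. Here $N>S_1(L_1+1)-1$ and $N>(1+L_0({I_1}^{-1}+1))/(\rho-1)$, with the bilateral recognizability index $L_0$ of Lemma~\ref{bil_recog_index}, guarantee that the length\nobreakdash-$N$ part preceding the discrepancy determines enough of the coarser structure for the edges to be well defined, while $L_1$ (from Lemma~\ref{unif_exp}) is large enough that the length\nobreakdash-$L_1$ look\nobreakdash-ahead is reproduced under $\sigma$, so the windows stay legitimate vertices $xac$ versus $ybc$ with the mismatch exactly at position $N+1$. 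As $G$ is finite and each vertex has at most one out\nobreakdash-edge (Remark~\ref{edgesarefunction}), the walk $v^{(0)}\to v^{(1)}\to\cdots$ is eventually periodic; its periodic part is a cycle, and it contains one of the infinitely many gap-generating $v^{(p)}$, which is (\ref{thecycle}).

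\emph{Main obstacle.} The difficulty lies entirely in the second implication: one must verify that the bounded windows of length $N+L_1+1$ faithfully record the local cutting structure, so that (i) they split as $xac$ versus $ybc$ with the mismatch precisely at position $N+1$, (ii) applying $\sigma$ and sliding over by the correct word $w$ is a genuine edge of $G$, and (iii) the gap-generating property actually reaches the cyclic part of the walk. This is exactly where the precise sizes of $N$ and $L_1$, governed by $L_0$ (Lemma~\ref{bil_recog_index}) and by $L_1$ (Lemma~\ref{unif_exp}), must be used essentially; the remaining steps are routine bookkeeping.
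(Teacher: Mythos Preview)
Your argument for \eqref{thecycle}$\Rightarrow$\eqref{non-recogforcycle} is correct and is essentially the paper's proof: telescope along the cycle to produce a common arbitrarily long tail, then read off the witnesses required by Theorem~\ref{necsuffcondunilrecog}.

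Your argument for \eqref{non-recogforcycle}$\Rightarrow$\eqref{thecycle} is not a proof; the part you call ``routine bookkeeping'' is the entire content, and the outline you give does not match what actually has to happen. Two concrete problems. First, you start from a single \emph{finite} witness $(i,j,L)$ and propose to ``apply $\sigma^p$'' to obtain vertices $v^{(p)}$ with edges $v^{(p)}\to v^{(p+1)}$. But an edge in $G$ records that the \emph{target} representative sits (with a tail $w$) inside $\sigma$ of the \emph{source} representative; following edges therefore means repeatedly passing to suffixes of $\sigma$-images of a fixed-length window, so your walk drifts away from the left end where the strict-suffix discrepancy lives, and there is no reason the resulting windows have the required shape $(xac,ybc)$ with $a\neq b$ and a common $c\in\L_{L_1}(u)$. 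Nothing you wrote produces that common $c$. Second, your claim that ``gap-generating vertices recur infinitely often'' is unsupported: the gap datum is attached to the left boundary of the original witness and is not reproduced by taking suffixes of $\sigma$-images.

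The paper's construction runs in the opposite direction and this is essential. One first passes by compactness from Theorem~\ref{necsuffcondunilrecog} to an \emph{infinite} pair $(xaz,ybw)\in X_\sigma\times X_\sigma$ with $\sigma(a)\prec_{\ssuf}\sigma(b)$ and $\sigma(z_i)=\sigma(w_i)$ for all $i$. Lemma~\ref{bil_recog_index} (local unique composition) is then used substantively, not as bookkeeping: it forces $z$ and $w$ to agree beyond a position $\ell$ with $|\sigma(z_{[0,\ell)})|\le L_0$, and it forces any $\sigma$-preimages $e,f\in X_\sigma$ of $xaz,ybw$ to share an infinite tail $\zeta$ after positions $m,n$; Lemma~\ref{unif_exp} together with the choice of $N$ then gives $m,n\le N$. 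This is what manufactures a genuine vertex $[\chi\alpha\gamma,\tau\beta\gamma]$ with $\alpha\neq\beta$ and a \emph{common} $\gamma=\zeta_{[0,L_1)}$, and simultaneously certifies that it generates a gap (the witnesses are $a,b$ and the words $z_{[0,\ell)}s\sigma(\gamma)$, $w_{[0,\ell)}s\sigma(\gamma)$). Iterating the preimage step on $(\chi\alpha\zeta,\tau\beta\zeta)$ yields an infinite chain of \emph{predecessors} $\cdots\to v_2\to v_1\to v_0$; since out-degree is at most one (Remark~\ref{edgesarefunction}), the first repetition must be $v_m=v_0$, so the gap-generating vertex $v_0$ lies on the cycle. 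Your forward-walk sketch has no analogue of either the common-tail step or this last out-degree argument.
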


\begin{proof}
\eqref{thecycle} $\Rightarrow$ \eqref{non-recogforcycle}: Assume that the directed, finite graph $G$ 
includes a cycle: 
\[
\Set{[x_ia_ic_i,y_ib_ic_i] \in V|0 \le i \le \ell}
\]
of length $\ell$ so that 
\begin{itemize}
\item
the vertex $[x_0a_0c_0,y_0b_0c_0]$ generates a gap of natural $1$-cutting points;
\item
$x_\ell a_\ell c_\ell = x_0 a_0 c_0$ and $y_\ell b_\ell c_\ell = y_0 b_0 c_0$;
\item
for every integer $i$ with $0 \le i < \ell$, there exists $w_{i+1} \in A^+$ satisfying that 
\[
x_{i+1}a_{i+1}c_{i+1}w_{i+1} \prec_{\suf} \sigma(x_ia_ic_i) \textrm{ and } 
y_{i+1}b_{i+1}c_{i+1}w_{i+1} \prec_{\suf} \sigma(y_ib_ic_i).
\]
\end{itemize}
For every integer $i$ with $i > \ell$, put $w_i=w_{(i \mod \ell) + 1}$. 
It is straightforward to see that for every $k \in \N$, 
\begin{align*}
x_0a_0c_0 w_{k\ell}\sigma(w_{k\ell-1}) \sigma^2(w_{k\ell-2}) \dots \sigma^{k\ell-2}(w_2) 
\sigma^{k\ell-1}(w_1) & \prec_{\suf} \sigma^{k\ell}(x_0a_0c_0); \\
y_0b_0c_0 w_{k\ell}\sigma(w_{k\ell-1}) \sigma^2(w_{k\ell-2}) \dots \sigma^{k\ell-2}(w_2) 
\sigma^{k\ell-1}(w_1) & \prec_{\suf} \sigma^{k\ell}(y_0b_0c_0).
\end{align*}
Hence, Condition~\eqref{charac} in Theorem~\ref{necsuffcondunilrecog} is satisfied. 

\eqref{non-recogforcycle} $\Rightarrow$ \eqref{thecycle}: 
Assume that $\sigma$ is not unilaterally recognizable. We shall see that the directed, finite graph 
$G$ has a vertex which generates a gap of natural $1$-cutting points. Using the pigeonhole principle 
together with Theorem~\ref{necsuffcondunilrecog} and the uniform recurrence of the sequence $u$, we 
can find $x,y \in \L_N(u)$, $a,b \in A$ and $z,w \in X_\sigma$ so that 
\begin{itemize}
\item
$xaz,ybw \in X_\sigma$;
\item
$\sigma(a) \prec_{\ssuf} \sigma(b)$;
\item
$\sigma(z_i)=\sigma(w_i)$ for all $i \in \Z_+$.
\end{itemize}
Lemma~\ref{bil_recog_index} allows us to find $\ell \in \Z_+$ such that 
\begin{itemize}
\item
$z_{\ell-1} \ne w_{\ell-1}$;
\item
$z_{[\ell,+\infty)}=w_{[\ell,+\infty)}$;
\item 
$|\sigma(z_{[0,\ell)})| \le L_0$;
\item
$|\sigma(w_{[0,\ell)})\vert \le L_0$,
\end{itemize}
where we use a convention that $z_{-1}=a$ and $w_{-1}=b$. 

By the pigeonhole principle again, using the hypothesis that the sequence $u$ is assumed to be 
aperiodic, we can find $e,f \in X_\sigma$ for which 
\begin{equation}\label{minimal_cover}
xaz \prec_{\suf} \sigma(e), \ xaz \nprec_{\suf} \sigma(e_{[1,+\infty)}), \ ybw \prec_{\suf} \sigma(f) 
\textrm{ and } ybw \nprec_{\suf} \sigma(f_{[1,+\infty)}). 
\end{equation}
In view of Lemma~\ref{bil_recog_index} again, there exist $m,n \in \N$ such that 
\begin{itemize}
\item
$\alpha:=e_{m-1} \ne f_{n-1}=:\beta$;
\item
$\zeta:=e_{[m,+\infty)}=f_{[n,+\infty)}$. 
\end{itemize}
Recall that $z_{\ell-1} \ne w_{\ell-1}$. There exists a prefix $s \in A^\ast$ of 
$z_{[\ell,+\infty)}=w_{[\ell,+\infty)}$ such that 
\begin{equation}\label{t}
|s| \le L_0, \ xaz_{[0,\ell)}s \prec_{\suf} \sigma(e_{[0,m)}) \textrm{ and } 
ybw_{[0,\ell)}s \prec_{\suf} \sigma(f_{[0,n)}).
\end{equation}
Since 
\[
m S_1 \ge |\sigma(e_{[0,m)})| \ge |xa| = N+1 \ge S_1 (L_1+1),
\]
we obtain that $m-1 \ge L_1$. This together with Lemma~\ref{unif_exp} implies that 
\[
\rho(m-1) \le |\sigma(e_{[1,m)})| < |xaz_{[0,\ell)}s| \le N + 1 + L_0({I_1}^{-1}+1) < \rho N,
\]
where the second inequality follows from the second property of \eqref{minimal_cover}, so that 
$m-1 < N$. 
Similarly, we obtain that $n-1 < N$. These facts allow us to find words $\chi,\tau \in \L_N(u)$ so that 
\begin{itemize}
\item
$e_{[0,m-1)} \prec_{\suf} \chi$;
\item
$\chi \alpha \zeta \in X_\sigma$;
\item
$f_{[0,n-1)} \prec_{\suf} \tau$;
\item
$\tau\beta \zeta \in X_\sigma$.
\end{itemize}
Consequently, we obtain that 
\begin{itemize}
\item
$\sigma(e) \prec_{\suf} \sigma(\chi)\sigma(e_{[m-1,+\infty)}) = \sigma(\chi \alpha \zeta)$;
\item
$\sigma(f) \prec_{\suf} \sigma(\tau)\sigma(f_{[n-1,+\infty)}) = \sigma(\tau \beta \zeta)$.
\end{itemize}
This together with \eqref{minimal_cover} shows that 
\begin{itemize}
\item
$xaz \prec_{\suf} \sigma(\chi \alpha \zeta)$;
\item
$ybw \prec_{\suf} \sigma(\tau \beta \zeta)$.
\end{itemize}
Since in virtue of \eqref{t} we know that $xaz_{[0,\ell)}s \prec_{\suf} \sigma(\chi \alpha)$, letting 
$\gamma = \zeta_{[0,L_1)}$, we obtain that 
$xaz_{[0,\ell)}s \sigma(\gamma) \prec_{\suf} \sigma(\chi \alpha \gamma)$, and also that 
$ybw_{[0,\ell)}s\sigma(\gamma) \prec_{\suf} \sigma(\tau \beta \gamma)$. 
We have obtained a vertex $[\chi \alpha \gamma, \tau \beta \gamma] \in V$ which generates a 
gap of natural $1$-cutting points. 

Now, apply the procedure to $(\chi \alpha \zeta, \tau \beta \zeta)$, which has been 
applied to $(xaz,ybw)$ for obtaining $(\chi \alpha \zeta, \tau \beta \zeta)$. 
It yields another vertex where an edge 
leaves for $(\chi \alpha \gamma, \tau \beta \gamma)$. Applying the procedure inductively 
yields an infinite path in the directed, finite graph, which results in a cycle in virtue of 
Remark~\ref{edgesarefunction}. 
\end{proof}


\paragraph{{\bf Acknowledgments}}
The first revision was undertaken during the third author's visit in 2014 at Laboratoire 
Ami\'enois de Math\'ematiques Fondamentales et Appliqu\'ees, CNRS-UMR 6140, Universit\'e de 
Picardie Jules Verne. He is grateful for hospitality and kind support extended to him by the 
institution. The second revision was done in 2015 after his visit at Institute of 
Mathematics, University of Tsukuba. This revision was done in 2017 after his visit at Centre 
International de Rencontres Math\'ematiques. He is also grateful for their hospitality and discussion. 
This work was partially supported by JSPS KAKENHI Grant Number 17K05159.

\end{document}